\newtheorem{thm}{Theorem}[section]
\newtheorem{lem}[thm]{Lemma}
\newtheorem{prop}[thm]{Proposition}
\newtheorem{defn}[thm]{Definition}
\newtheorem{cor}[thm]{Corollary}
\numberwithin{equation}{section}
\def\XXint#1#2#3{{\setbox0=\hbox{$#1{#2#3}{\int}$}
  \vcenter{\hbox{$#2#3$}}\kern-.5\wd0}}
\newcommand{\al}{\alpha}                
\newcommand{\va}{\varepsilon}           \newcommand{\eps}{\varepsilon}           \newcommand{\ud}{\mathrm{d}}
\newcommand{\be}{\begin{equation}}      \newcommand{\ee}{\end{equation}}
\newcommand{\R}{\mathbb{R}}
\newcommand{\Div}{\mbox{ div}}
\newcommand{\osc}{\mbox{ osc}}
\begin{document}

\date{\today}
\title{\textbf{H\"older gradient estimates for  parabolic homogeneous $p$-Laplacian equations}}

\author{\medskip Tianling Jin\footnote{Supported in part by NSF grant DMS-1362525.} \quad and \quad Luis Silvestre\footnote{Supported in part by NSF grants DMS-1254332 and DMS-1065979.}}
\maketitle

\begin{abstract}
We prove interior H\"older  estimates for the spatial gradient of viscosity solutions to the parabolic homogeneous $p$-Laplacian equation
\[
 u_t=|\nabla u|^{2-p} \mbox{ div} (|\nabla u|^{p-2}\nabla u),
\]
where $1<p<\infty$. This equation arises from \emph{tug-of-war}-like stochastic games with noise. It can also be considered as the parabolic $p$-Laplacian equation in non divergence form.
\end{abstract}

\section{Introduction}
For $1<p<\infty$, the $p$-Laplacian equation
\begin{equation}\label{eq:p div}
 \Delta_p u:= \Div (|\nabla u|^{p-2}\nabla u)=0
\end{equation}
is the Euler-Lagrange equation of the energy functional
\begin{equation}\label{eq:p functional}
 \frac 1p \int |\nabla u(x)|^p d x,
\end{equation}
where $\nabla$ and $\Div$ are the gradient and divergence operators in the variable $x\in\R^n$. It is a classical result that every weak solution of \eqref{eq:p div} in the distribution sense is $C^{1,\alpha}$ for some $\alpha>0$. This result and its various proofs can be found in, e.g., Ural'ceva \cite{Ural}, Uhlenbeck \cite{Uhlenbeck}, Evans \cite{Evans2}, DiBenedetto \cite{DiBenedetto2}, Lewis \cite{Lewis}, Tolksdorf \cite{Tolksdorf} and Wang \cite{lihewang}.

The negative gradient flow of the energy functional \eqref{eq:p functional} takes the form of 
\begin{equation}\label{eq:divergence}
u_t= \Div (|\nabla u|^{p-2}\nabla u).
\end{equation}
H\"older estimates for the spatial gradient of weak solutions to \eqref{eq:divergence} were obtained by DiBenedetto and Friedman in \cite{DBF} (see also Wiegner \cite{Wiegner}), and we refer to the book  of DiBenedetto \cite{DiBenedetto} for an extensive overview on \eqref{eq:divergence} and more general cases. 

The equation above comes from a variational  interpretation of  the $p$-Laplacian operator. This is not the equation we study in this work. Our equation is motivated by the stochastic tug of war game interpretation of the $p$-Laplacian operator given by  Peres and Sheffield in \cite{PS}. Such time dependent stochastic games  will lead not to \eqref{eq:p div}, but rather the equation
\begin{equation}\label{eq:main}
u_t=|\nabla u|^{2-p} \Div (|\nabla u|^{p-2}\nabla u).
\end{equation}
This derivation is presented in Manfredi-Parviainen-Rossi  \cite{MPR}. The equation \eqref{eq:main} can also be written as
\begin{equation}\label{eq:main1}
u_t=(\delta_{ij} + (p-2)|\nabla u|^{-2}u_iu_j)u_{ij},
\end{equation}
where the summation convention is used. Through \eqref{eq:main1}, one can view the equation \eqref{eq:main} as the parabolic $p$-Laplacian equation in non divergence form. 

The majority of the previous work on elliptic and parabolic $p$-Laplace equation rely heavily on the variational structure of the equation. The equation \eqref{eq:main} does not have that structure. Therefore, we must take a completely different point of view using tools for equations in non-divergence form. To begin with, our notion of solution would be a viscosity solution instead of a solution in the sense of distributions. Thus, this work has hardly anything in common with the more classical results about regularity for $p$-Laplacian type equations. We use maximum principles and geometrical methods. 

Our work concerns the equation \eqref{eq:main} for the values of $p \in (1,+\infty)$. In this case, the existence and uniqueness of viscosity solutions to the initial-boundary value problems for \eqref{eq:main} have been established in Banerjee-Garofalo \cite{BG} and Does \cite{KD}, where they also proved the Lipschitz continuity in the spatial variables and studied the long time behavior of the viscosity solution. These properties were further studied in \cite{BG3,BG2, Juutinen2014} for \eqref{eq:main} or more general equations.  Manfredi, Parviainen and Rossi studied the equation \eqref{eq:main} as an asymptotic limit of certain mean value properties which are related to the tug-of-war game with noise originally described in \cite{PS}, when the number of rounds is bounded. One may find more results on the tug-of-war game with noise and the $p$-Laplacian operators in, e.g.,  \cite{KKK, LewMan, LS2015, MPR2012, Rossi2011,  RossiGAMES, Rudd2015}.

Even though all published work about the equation \eqref{eq:main} appeared only in recent years, we have seen an unpublished handwritten note by N. Garofalo from 1993 referring to this equation. In that note, there is a computation which leads to the result of Lemma \ref{lem:subsolution} in this paper. This is, up to our knowledge, the first time when it was recognized that the equation \eqref{eq:main} should have good regularization properties.

It is interesting to point out what our equation represents for $p=1$ and $p=+\infty$, even though these end-point cases are not included in our analysis. They appeared in the literature much earlier than \eqref{eq:main}. In these two cases, it is clear that the parabolic equation in non-divergence form \eqref{eq:main} is more important and better motivated than \eqref{eq:divergence}.

When $p=1$, the equation \eqref{eq:main} is the motion of the level sets of $u$ by its mean curvature, which has been studied by, e.g.,  Chen-Giga-Goto \cite{CGG}, Evans-Spruck \cite{ES1,ES2,ES3,ES4}, Evans-Soner-Souganidis \cite{ESS}, Ishii-Souganidis \cite{IS} and Colding-Minicozzi \cite{CM15}. A game of motion by mean curvature was introduced by Spencer \cite{Spencer} and studied by Kohn and Serfaty \cite{KSS}.

In another extremal case $p=+\infty$, it becomes the evolution governed by the infinity Laplacian operator. The infinity Laplacian operator  $\Delta_\infty$ defined by $\Delta_\infty u=\sum_{i,j}u_iu_iu_{ij}$ appears naturally when one considers absolutely minimizing Lipschitz extensions of a function defined on the boundary of a domain. Jensen \cite{Jensen}  proved that the absolute minimizer is the unique viscosity solution of the infinity Laplace equation $\Delta_\infty u=0$, of which the solutions are usually called infinity harmonic functions.  Savin in \cite{Savin} has shown that infinity harmonic functions are in fact continuously differentiable in the two dimensional case, and Evans-Savin \cite{ESavin} further proved the H\"older continuity of their gradient. Later, Evans-Smart \cite{ESmart} proved the everywhere differentiability of infinity harmonic functions in all dimensions. A game theoretical interpretation of this infinity Laplacian was given by Peres-Schramm-Sheffield-Wilson \cite{PSSW}. Finite difference methods for the infinity Laplace and $p$-Laplace equations were studied by Oberman \cite{Oberman2013}. The parabolic equation \eqref{eq:main} in this extremal case $p=\infty$ has been studied by, e.g., Juutinen-Kawohl \cite{JK} and Barron-Evans-Jensen \cite{BEJ}.

Our notion of solutions to \eqref{eq:main1} is the viscosity solution, which will be recalled in Definition \ref{den:viscosity} in Section \ref{sec: preliminaries}. For $1<p<\infty$, one observes that
\[
 \min(p-1,1)I\le \delta_{ij}+(p-2)q_iq_i |q|^{-2}\le  \max(p-1,1)I\quad\mbox{for all }q\in\R^n\setminus\{0\},
\]
where $I$ is the $n\times n$ identity matrix. Therefore, the equation \eqref{eq:main1} is uniformly parabolic. It follows from the regularity theory of Krylov-Safonov \cite{KS} that the viscosity solution $u$ of \eqref{eq:main1} is H\"older continuous in the space-time variables. As mentioned earlier,  Banerjee-Garofalo \cite{BG} and Does \cite{KD} proved that the solution $u$ is Lipschitz continuous in the spatial variables. Whether or not the spatial gradient $\nabla u$ is H\"older continuous was left as an interesting open question.

In this paper, we answer this question and prove the following interior H\"older estimates for the spatial gradient of viscosity solutions to \eqref{eq:main1}.

\begin{thm}\label{thm:mainholdergradient}
 Let $u$ be a viscosity solution of \eqref{eq:main1} in $Q_1$, where $1<p<\infty$. Then there exist two constants $\alpha\in (0,1)$ and $C>0$, both of which depends only on $n$ and $p$, such that 
\[
 \|\nabla u\|_{C^\alpha(Q_{1/2})}\le C\|u\|_{L^\infty(Q_1)}.
\]
Also, there holds
\[
\sup_{(x,t), (x,s)\in Q_{1/2}}\frac{|u(x,t)-u(x,s)|}{|t-s|^{\frac{1+\alpha}{2}}}\le C\|u\|_{L^\infty(Q_1)}.
\]
\end{thm}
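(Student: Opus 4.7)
The plan is to establish an oscillation decay estimate for $\nabla u$ on every dyadic parabolic cylinder, iterate it to obtain the spatial $C^{1,\alpha}$ bound, and then read off the time regularity from the equation. Concretely, I aim to prove that there exist $\rho,\lambda\in(0,1)$, depending only on $n$ and $p$, such that every viscosity solution $u$ of \eqref{eq:main1} in $Q_1$ with $\|u\|_{L^\infty(Q_1)}\le 1$ admits a vector $q\in\R^n$, with $|q|$ a priori bounded using the spatial Lipschitz estimate of Banerjee-Garofalo \cite{BG} and Does \cite{KD}, such that
\[
\sup_{Q_\rho}\bigl|u(x,t)-u(0,0)-q\cdot x\bigr| \le \lambda\rho.
\]
Under the parabolic rescaling $v(x,t)=(u(\rho x,\rho^2 t)-u(0,0)-\rho q\cdot x)/(\lambda\rho)$, the function $v$ becomes a new bounded viscosity solution of an equation of the same type $v_t=A_{ij}(\lambda\nabla v+q)v_{ij}$, whose ellipticity constants remain $\min(p-1,1)$ and $\max(p-1,1)$ independently of $\lambda$ and $q$. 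Iterating the decay along this one-parameter family and summing a geometric series produces the $C^{1,\alpha}$ bound in space with $\alpha=\log\lambda/\log\rho$.

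The heart of the matter is a dichotomy based on the size of $\nabla u$ in $Q_1$. In the \emph{non-degenerate} regime, $|\nabla u|\ge\mu>0$ throughout the cylinder; then $A_{ij}(\nabla u)=\delta_{ij}+(p-2)|\nabla u|^{-2}u_iu_j$ is uniformly parabolic with continuous coefficients, the directional derivative $\partial_e u$ solves the linearized equation in the viscosity sense, and the Krylov-Safonov H\"older estimate from \cite{KS} yields the required decay of $\operatorname{osc}(\nabla u)$ on a sub-cylinder. In the \emph{degenerate} regime, where $\nabla u$ is small at some point of $Q_1$, the goal is instead to show that $\nabla u$ is uniformly small throughout a smaller sub-cylinder, so that $u$ is well approximated by the trivial affine function with $q=0$.

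The main obstacle is controlling this degenerate regime, and Lemma \ref{lem:subsolution} is precisely the tool for it. Following the Garofalo observation referenced in the introduction, the expectation is that a suitable function of $|\nabla u|^2$ (for instance $(|\nabla u|^2-k)_+$ at an appropriate level $k$) is a viscosity subsolution of a uniformly parabolic equation on all of $Q_1$, including across the singular set $\{\nabla u=0\}$. Once such a differential inequality is available, the smallness of $|\nabla u|$ at one space-time point can be propagated to an entire sub-cylinder by a weak Harnack or maximum-principle argument, closing the degenerate case. The H\"older regularity in $t$ with exponent $(1+\alpha)/2$ is then read off a posteriori: at each scale $r$ one subtracts the best spatial affine approximation and uses \eqref{eq:main1} to bound the time oscillation by a multiple of $r^{1+\alpha}$, and a Campanato-type summation yields the claimed parabolic exponent.
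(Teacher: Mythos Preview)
Your dichotomy has genuine gaps on both sides. In the non-degenerate regime you propose to differentiate the equation and apply Krylov--Safonov to $\partial_e u$. But differentiating \eqref{eq:main1} produces a drift term $a_{ij,m}(\nabla u)\,u_{ij}\,\partial_m$ whose coefficient involves the Hessian of $u$, which is not known to be bounded; Krylov--Safonov with unbounded drift does not give the oscillation decay you need. The paper's fix (Lemma~\ref{lem:osc2}) is nontrivial: one works not with $\nabla u\cdot e$ alone but with $w=(\nabla u\cdot e-\ell+\rho|\nabla u|^2)^+$, so that the equation for $w$ carries an extra \emph{negative} term $-2\rho\,a_{ij}u_{ki}u_{kj}$ which absorbs the Hessian from the drift via Cauchy--Schwarz, leaving only a quadratic $|\nabla w|^2$ that is removed by a Hopf--Cole substitution. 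Moreover, the paper does not conclude the non-degenerate case by differentiating at all; once $\nabla u$ is shown to be close to a fixed vector $e$ on most of the cylinder (Lemma~\ref{l:gradientarounde2smalloscillation}), it invokes Wang's small-perturbation theorem (Theorem~\ref{l:smallness to regularity}) to get $C^{2,\gamma}$ directly. You are missing both of these ingredients.

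In the degenerate regime your use of Lemma~\ref{lem:subsolution} is backwards. That lemma says $(|\nabla u|^2+\varepsilon^2)^{p/2}$ is a \emph{subsolution}; subsolutions obey the local maximum principle (sup controlled by an $L^\gamma$ norm), not the weak Harnack inequality. Smallness of a subsolution at a single space-time point gives no information whatsoever at other points---propagation of pointwise lower bounds is a supersolution property. Consequently ``$\nabla u$ small at one point implies $\nabla u$ small on a sub-cylinder'' is simply false under the available inequalities. The paper's dichotomy is framed instead as a \emph{measure} condition on $\{\nabla u\cdot e\le\ell\}$ for every direction $e$ (Corollary~\ref{cor:osc}): either this set has mass $>\mu|Q_{\tau^i}|$ at every scale and every direction, in which case $|\nabla u|$ decays geometrically, or it fails at some scale and direction, forcing $\nabla u$ close to $e$ on a set of large measure and feeding into Lemma~\ref{l:gradientarounde2smalloscillation} and Theorem~\ref{l:smallness to regularity}. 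Finally, note the paper carries all of this out for the regularized equation \eqref{eq:main va} with smooth solutions and only passes to viscosity solutions by approximation at the end; working directly with viscosity solutions as you propose would add further difficulties when you try to differentiate.
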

Here, $Q_r=B_r\times(-r^2,0]$ is denoted as the standard parabolic cylinder, where $r>0$ and $B_r\subset\R^n$ is the ball of radius $r$ centered at the origin. Combining those two estimates in Theorem \ref{thm:mainholdergradient}, we have that
\[
|u(y,s)-u(x,t)-\nabla u(x,t)\cdot (y-x)|\le C\|u\|_{L^\infty(Q_1)} (|y-x|+\sqrt{|t-s|})^{1+\alpha}
\]
for all $(y,s), (x,t)\in Q_{1/2}$.

The equation \eqref{eq:main1} is quasi-linear, and $(\delta_{ij} + (p-2)|\nabla u|^{-2}u_iu_j)$ can be viewed as the coefficients of the equation. Note that these coefficients  have a singularity when $\nabla u=0$. This 
is what causes the main difficulties in the proof of our main result. The only thing in common with previous proofs of $C^{1,\alpha}$ regularity with equations of $p$-Laplacian type is perhaps the general outline of steps necessary for the proof. The oscillation of the gradient is reduced in a shrinking sequence of parabolic cylinders. The iterative step is reduced to a dichotomy between two cases: either the value of the gradient $\nabla u$ stays close to a fixed vector $e$ for most points $(x,t)$ (in measure), or it does not. The way each of these two cases is resolved (which is the key of the proof), follows a new idea. Traditionally, the variational structure of the equation played a crucial role in the resolution of each of these two cases. The key ideas in this paper are contained in Section 4, especially in Lemmas \ref{lem:osc2} and \ref{l:gradientarounde2smalloscillation}. Lemma \ref{l:gradientarounde2smalloscillation} allows us to apply a recent result by Yu Wang \cite{wang} (which is the parabolic version of a result by Savin \cite{Savin07}) to resolve one of the two cases in the dichotomy.

In the process of proving Lemma \ref{l:gradientarounde2smalloscillation}, we obtain Lemma  \ref{l:timeslices2allcylinder} which is a general property of solutions to uniformly parabolic equations and may be interesting by itself. It states that an upper bound on the oscillation $\osc_{x \in B_1} u(x,t)$ for every fixed $t \in [a,b]$ implies an upper bound in space-time for $\osc_{(x,t) \in B_1 \times [a,b]} u(x,t)$.

In future work \cite{IJS}, we plan to adapt the method presented in this paper to obtain the H\"older continuity of $\nabla u$ for the following generalization of \eqref{eq:main}:
\[ u_t = |\nabla u|^\kappa \Delta_p u.\]
Here $\kappa$ is an arbitrary power in the range $\kappa \in (1-p,+\infty)$. The equation generalizes both the classical (scalar) parabolic $p$-Laplacian equation in divergence form \eqref{eq:divergence} and in non divergence form \eqref{eq:main}.

This paper is organized as follows. In Section \ref{sec: preliminaries}, we start by recalling some well-known regularity results for solutions of uniformly parabolic equations which will be used in our proof of Theorem \ref{thm:mainholdergradient}, as well as the definition and two properties of the viscosity solutions of \eqref{eq:main1}.  We then introduce a regularization procedure for \eqref{eq:main1}. In Section \ref{sec:lip}, we will establish Lipschitz estimates for the solutions $u$ of its regularized problem. The result of Section 3 is not new, but we present a new proof within our context. In Section \ref{sec:holder}, we obtain the H\"older estimates for $\nabla u$, which is the most technically challenging part and the core of this paper. Finally, Theorem \ref{thm:mainholdergradient} will follow from approximation arguments, whose details will be presented in Section \ref{sec:appr}. 

\section{Preliminaries}\label{sec: preliminaries}
In this section, we recall some known regularity results for solutions of linear uniformly parabolic equations with measurable coefficients: 
\begin{equation}\label{eq:linear parabolic}
 u_t - a_{ij}(x,t) \partial_{ij} u = 0 \quad\text{ in } Q_1,
\end{equation}
where $a_{ij}(x,t)$ is uniformly parabolic, i.e. there are constants $0 < \lambda \leq \Lambda<\infty$ such that
\begin{equation}\label{eq:ellipticity} 
 \lambda I \leq a_{ij}(x,t) \leq \Lambda I \quad\text{ for all } (x,t)\in Q_1.
\end{equation}
The first two in the below are the weak Harnack inequality and local maximum principle due to Krylov and Safonov. For their proofs, we refer to the lecture notes by Imbert and Silvestre \cite{ImbertS}. 

\begin{thm}[Weak Harnack inequality]\label{thm:weak harnack}
 Let $u\in C(Q_1)$ be a non negative supersolution of \eqref{eq:linear parabolic} satisfying \eqref{eq:ellipticity}. Then there exist two positive constants $\theta$ (small) and $C$ (large), both of which depend only on $n,\lambda$ and $\Lambda$, such that
\[
 \|u\|_{L^\theta(Q^*_{1/2})}\le C \inf_{Q_{1/2}}u,
\]
where $Q_{1/2}^*=B_{1/2}\times (-1,-3/4)$.
\end{thm}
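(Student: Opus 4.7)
The plan is to follow the classical Krylov--Safonov strategy: reduce the statement to an exponential decay estimate for the distribution function of $u$, which then integrates to an $L^\theta$ bound for some small $\theta>0$. Concretely, by scaling and dividing by a constant it suffices to show that if $u$ is a non-negative supersolution with $\inf_{Q_{1/2}} u \le 1$, then
\[
 \bigl|\{(x,t)\in Q_{1/2}^*\,:\,u(x,t) > M^{k}\}\bigr| \le (1-\mu)^{k}\,|Q_{1/2}^*|
\]
for some $M>1$ and $\mu\in(0,1)$ depending only on $n,\lambda,\Lambda$. Writing
\[
 \int_{Q_{1/2}^*}u^\theta\,dx\,dt = \theta \int_0^\infty s^{\theta-1}\bigl|\{u>s\}\cap Q_{1/2}^*\bigr|\,ds,
\]
one then picks $\theta$ so small that $M^\theta(1-\mu)<1$; the geometric series converges and yields the desired estimate.

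The central ingredient is a \emph{point-to-measure} lemma obtained from the parabolic Aleksandrov--Bakelman--Pucci (ABP) estimate of Tso: if $v$ is a non-negative supersolution on a parabolic cylinder $K$ with $\inf_{K_0} v \le 1$ on some sub-cylinder $K_0$, then there exist constants $c_0>0$ and $\mu_0>0$ such that $|\{v\le N\}\cap K|\ge \mu_0 |K|$ for a fixed large $N$. I would prove this by applying parabolic ABP to the function $w=\min(v,2)$ after subtracting a smooth cut-off that forces a contact point in $K_0$; the uniform parabolicity and the lower bound on the contact set provided by ABP produces the required measure estimate. The second ingredient is a covering/iteration lemma, the parabolic analogue of Calder\'on--Zygmund, often called the \emph{Ink-Spots Theorem}: given a measurable set $E\subset Q_1$ with $|E|\le \delta |Q_1|$, one can enlarge $E$ by stacking each dyadic cube of the Calder\'on--Zygmund stopping-time decomposition with its predecessor in the past time direction, and the enlarged set has measure at most $\delta/\mu$ times $|Q_1|$. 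Applying the point-to-measure estimate inside each stopped cube and combining with the ink-spots enlargement upgrades a one-step decay into the full geometric decay displayed above.

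Putting these pieces together: starting from the hypothesis $\inf_{Q_{1/2}} u \le 1$, set $A_k=\{u>M^k\}\cap Q_{1/2}^*$ and apply the point-to-measure lemma on every parabolic sub-cube on which $A_{k+1}$ has large density, rescaling $u$ by $M^{-k}$. The ink-spots theorem converts the local density statement into the global bound $|A_{k+1}|\le(1-\mu)|A_k|$, from which the $L^\theta$ bound follows by the integration against $s^{\theta-1}$ above.

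The main obstacle is the correct implementation of the parabolic covering. Unlike the elliptic case, one cannot simply use doubled cubes: information propagates only forward in time, so the natural enlargement of a stopping-time cube $Q=B_r(x_0)\times(t_0-r^2,t_0)$ must be its \emph{past predecessor} $\widetilde{Q}=B_{Kr}(x_0)\times(t_0-Kr^2,t_0-r^2)$, and one has to verify that these predecessors satisfy a measure-theoretic covering bound compatible with the dyadic stopping-time argument. Once the ink-spots lemma is set up cleanly (which is the chief technical content; see Imbert--Silvestre for a full exposition), the remainder of the proof is a routine rescaling together with the ABP-based point-to-measure estimate and geometric iteration.
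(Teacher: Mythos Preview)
The paper does not give its own proof of this theorem: it is quoted as a known result of Krylov--Safonov and the reader is referred to the lecture notes of Imbert--Silvestre for the argument. Your outline is precisely the Krylov--Safonov scheme presented in those notes (Tso's parabolic ABP $\Rightarrow$ point-to-measure lemma, the ink-spots/stacked-cubes covering, geometric decay of the superlevel sets, and integration against $s^{\theta-1}$), so there is nothing to compare---your approach coincides with the cited one.
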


\begin{thm}[Local maximum principle]\label{thm:local max principle}
 Let $u\in C(Q_1)$ be a subsolution of \eqref{eq:linear parabolic} satisfying \eqref{eq:ellipticity}. For every $\gamma>0$, there  exists a positive constant  $C$  depending only on $\gamma, n,\lambda$ and $\Lambda$, such that
\[
 \sup_{Q_{1/2}}u\le C \|u^+\|_{L^\gamma(Q_1)},
\]
where $u^+=\max(u,0)$.
\end{thm}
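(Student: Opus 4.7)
The plan is to follow the classical Krylov--Safonov strategy: first prove the estimate for one specific exponent $\gamma_0 > 0$, and then promote it to arbitrary $\gamma > 0$ by an interpolation on nested cylinders. Since the pointwise maximum of a subsolution with the constant $0$ is again a viscosity subsolution of the same linear equation, and the statement involves only $u^+$ on the right-hand side and $\sup u$ on the left, I would reduce to the case $u \ge 0$ by replacing $u$ with $u^+$.

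\textbf{Step 1: point estimate for one exponent.} The goal is to show that, with $\gamma_0 := \theta$ the exponent from Theorem \ref{thm:weak harnack}, every sub-cylinder $Q_R(x_0,t_0) \subset Q_1$ satisfies
\[
  \sup_{Q_{R/2}(x_0,t_0)} u \le C_0\, R^{-(n+2)/\gamma_0}\, \|u\|_{L^{\gamma_0}(Q_R(x_0,t_0))}.
\]
Set $M = \sup_{Q_R(x_0,t_0)} u$. Since $M - u \ge 0$ is a supersolution, applying Theorem~\ref{thm:weak harnack} after rescaling $Q_R(x_0,t_0)$ to $Q_1$ yields
\[
  \Bigl( \int_{Q^*_{R/2}(x_0,t_0)} (M-u)^{\gamma_0}\Bigr)^{1/\gamma_0} \le C\, R^{(n+2)/\gamma_0}\,\bigl(M - \sup_{Q_{R/2}(x_0,t_0)} u \bigr).
\]
Combining with Chebyshev's inequality
\[
  \bigl|\{u > M/2\} \cap Q_R(x_0,t_0)\bigr| \le (2/M)^{\gamma_0} \|u\|_{L^{\gamma_0}(Q_R(x_0,t_0))}^{\gamma_0}
\]
produces a dichotomy: either $M \le C R^{-(n+2)/\gamma_0} \|u\|_{L^{\gamma_0}(Q_R(x_0,t_0))}$ (and we are done), or $\{u \le M/2\}$ occupies at least half of $Q^*_{R/2}(x_0,t_0)$, on which $(M-u)^{\gamma_0} \ge (M/2)^{\gamma_0}$, and plugging this into the display above gives $\sup_{Q_{R/2}(x_0,t_0)} u \le \beta M$ for some $\beta \in (0,1)$ depending only on $n,\lambda,\Lambda$. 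Iterating this dichotomy on the nested cylinders $Q_{R/2^k}(x_0,t_0)$ and invoking continuity of $u$ at $(x_0,t_0)$ closes the point estimate.

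\textbf{Step 2: extension to arbitrary $\gamma > 0$.} For $\gamma \ge \gamma_0$ the claim is immediate from Step 1 by H\"older's inequality. For $0 < \gamma < \gamma_0$, set $\phi(r) = \sup_{Q_r} u$ for $r \in [1/2,1]$. Covering $Q_r$ by sub-cylinders of radius $(R-r)/2$ and applying the point estimate of Step 1 gives, for $1/2 \le r < R \le 1$,
\[
  \phi(r) \le C (R-r)^{-(n+2)/\gamma_0} \|u\|_{L^{\gamma_0}(Q_R)}.
\]
The elementary interpolation $\|u\|_{L^{\gamma_0}(Q_R)} \le \phi(R)^{1-\gamma/\gamma_0} \|u\|_{L^\gamma(Q_R)}^{\gamma/\gamma_0}$ combined with Young's inequality produces
\[
  \phi(r) \le \tfrac12 \phi(R) + C (R-r)^{-(n+2)/\gamma} \|u\|_{L^\gamma(Q_1)}.
\]
A standard iteration lemma applied with $r_k = 1/2 + 2^{-k-1}$ absorbs the $\phi(R)$ term and yields $\phi(1/2) \le C \|u\|_{L^\gamma(Q_1)}$, as desired.

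\textbf{Main obstacle.} The delicate point is Step 1. The time lag between the lower cylinder $Q^*_{R/2}(x_0,t_0)$ and the upper cylinder $Q_{R/2}(x_0,t_0)$ in Theorem~\ref{thm:weak harnack} means that a single application only reduces the supremum at earlier times, so propagating the bound all the way up to $t_0$ requires a careful stacked-cylinders construction. Once the point estimate is secured, Step 2 is a standard interpolation that appears in many references.
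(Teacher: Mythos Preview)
The paper does not actually prove this theorem: it is stated in Section~\ref{sec: preliminaries} as a known Krylov--Safonov result and the reader is referred to the lecture notes \cite{ImbertS}. So there is no proof in the paper to compare against; your proposal is an attempt to supply one.

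Your overall two-step architecture is standard and Step~2 (the Young-inequality interpolation combined with the iteration lemma on nested cylinders) is correct. However, Step~1 has a genuine gap in the iteration. At scale $R/2^k$ your ``either'' alternative reads
\[
M_k := \sup_{Q_{R/2^k}(x_0,t_0)} u \;\le\; C\,(R/2^k)^{-(n+2)/\gamma_0}\,\|u\|_{L^{\gamma_0}(Q_{R/2^k}(x_0,t_0))}
\;\le\; C\,2^{k(n+2)/\gamma_0}\,R^{-(n+2)/\gamma_0}\,\|u\|_{L^{\gamma_0}(Q_R)},
\]
so the threshold grows geometrically in $k$. If the ``or'' alternative holds for $k=0,\dots,k_0-1$ you only learn $M_{k_0}\le\beta^{k_0}M_0$, which is a bound on $M_{k_0}$ in terms of the \emph{unknown} $M_0$; combining with the ``either'' bound at step $k_0$ gives $u(x_0,t_0)\le M_{k_0}\le C\,2^{k_0(n+2)/\gamma_0}\,R^{-(n+2)/\gamma_0}\|u\|_{L^{\gamma_0}}$, which is not uniform in $k_0$. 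Invoking continuity at $(x_0,t_0)$ only handles the case where ``or'' holds for all $k$ (then $u(x_0,t_0)=0$), not the generic case. So the dichotomy does not close into a point estimate at a fixed scale.

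The clean fix is to bypass Step~1 and run your Step~2 mechanism directly at $\gamma_0$. Pick $(x_0,t_0)\in Q_r$ near the supremum, set $\rho=(R-r)/2$, $M=\sup_{Q_\rho(x_0,t_0)}u\le\phi(R)$, and apply the weak Harnack inequality to $M-u$ on $Q_\rho(x_0,t_0)$. Since $M^{\gamma_0}\le C\bigl((M-u)^{\gamma_0}+u^{\gamma_0}\bigr)$ pointwise, integrating over $Q^*_{\rho/2}(x_0,t_0)$ and using $\sup_{Q_{\rho/2}(x_0,t_0)}u\ge u(x_0,t_0)\ge\phi(r)$ yields
\[
\phi(r)\;\le\;\theta\,\phi(R) + C\,(R-r)^{-(n+2)/\gamma_0}\,\|u\|_{L^{\gamma_0}(Q_1)}
\]
for some $\theta\in(0,1)$, and the iteration lemma finishes. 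Also, your ``main obstacle'' paragraph misidentifies the difficulty: in Theorem~\ref{thm:weak harnack} the infimum is taken over the \emph{later} cylinder $Q_{1/2}$, so for the supersolution $M-u$ one controls $M-\sup_{Q_{1/2}}u$ directly at the top time, and no forward propagation is needed.
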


The exact statement which we will use regarding to improvement of oscillation for supersolutions of \eqref{eq:linear parabolic}  is of the following form.

\begin{prop}[Improvement of oscillation]\label{prop:io}
 Let $u\in C(Q_1)$ be a non negative supersolution of \eqref{eq:linear parabolic} satisfying \eqref{eq:ellipticity}. For every $\mu\in (0,1)$, there exist two positive constants $\tau$ and $\gamma$, where $\tau$ depends only on $\mu$ and $n$, and $\gamma$ depends only on $\mu, n,\lambda$ and $\Lambda$, such that if
\[
|\{(x,t)\in Q_1: u\ge 1\}|>\mu |Q_1|,
\]
then
\[
u\ge \gamma\quad\mbox{in }Q_\tau.
\]
\end{prop}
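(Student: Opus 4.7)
The proof combines the weak Harnack inequality (Theorem~\ref{thm:weak harnack}) with a pigeonhole localization and, if necessary, a chaining argument. Weak Harnack converts an $L^\theta$ lower bound for $u$ on the past portion of a parabolic cylinder into a pointwise positive lower bound on the future portion; the task is therefore to localize the measure-theoretic hypothesis to the past portion of a suitable sub-cylinder, and then to propagate the resulting pointwise bound forward in time so as to cover $Q_\tau$.

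First I would fix a scale $\rho = \rho(\mu,n) > 0$ and tile $Q_1$ by parabolic boxes $P_{i,k} = B_{\rho/2}(x_i) \times (s_k, s_k + \rho^2/4)$, each of which serves as the past portion of a full parabolic cylinder $C_{i,k} = B_\rho(x_i) \times (s_k - 3\rho^2/4, s_k + \rho^2/4)$ in the sense of the rescaled Theorem~\ref{thm:weak harnack}. Since the tiling consists of $N = N(\rho, n) = N(\mu, n)$ cells, the hypothesis $|\{u \ge 1\} \cap Q_1| > \mu |Q_1|$ combined with pigeonhole produces a tile $P_{i_0, k_0}$ on which $|\{u \ge 1\} \cap P_{i_0, k_0}| \ge \mu |P_{i_0, k_0}|$. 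The rescaled weak Harnack applied to $C_{i_0, k_0}$ then yields $u \ge \gamma_1$ on its future portion, for some $\gamma_1 = \gamma_1(\mu, n, \lambda, \Lambda) > 0$.

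If the future portion of $C_{i_0, k_0}$ already contains $Q_\tau$ (which can be arranged by choosing $\tau$ small enough in terms of $\mu$ and $n$), the proof is complete. Otherwise I would chain: the set $\{u \ge \gamma_1\}$ now has density $1$ in a full sub-cylinder, so a second application of the weak Harnack on a suitably shifted cylinder (whose past portion lies in this sub-cylinder) extends the bound, and iterating a bounded (in terms of $\mu$ and $n$) number of such steps yields $u \ge \gamma$ on $Q_\tau$, with $\gamma$ depending on $\mu, n, \lambda, \Lambda$ through a product of weak-Harnack constants.

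The main obstacle is geometric rather than analytic: the pigeonhole tile $P_{i_0, k_0}$ can sit anywhere in $Q_1$, so one must route the chain of cylinders through $Q_1$ in a way that advances both spatially toward the origin and forward in time to reach the top of $Q_\tau$, while using only finitely many steps whose count is controlled by $\mu$ and $n$ alone (otherwise the cumulative multiplicative loss would drive $\gamma$ to zero or introduce a spurious dependence on $\lambda, \Lambda$ in $\tau$). Choosing $\rho$ and $\tau$ compatibly so that the entire chain fits inside $Q_1$ with a uniformly bounded depth is the key technical adjustment.
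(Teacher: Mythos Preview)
Your proposal is correct and follows essentially the same approach as the paper: restrict/tile, pigeonhole to locate a sub-cylinder carrying a fixed fraction of $\{u\ge 1\}$, apply the weak Harnack inequality there, and then propagate the resulting pointwise lower bound forward (and laterally) to reach $Q_\tau$. The only difference is in the propagation step: the paper outsources it to an external spreading-of-positivity result (Lemma~4.1 in \cite{FS}, applied to $m-u$), whereas you sketch an explicit Harnack chain; since the number of links is controlled by $\mu$ and $n$ alone, your version yields the same dependencies for $\tau$ and $\gamma$.
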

\begin{proof}
First of all, we can choose $\tau>0$ small such that $1/\tau$ is an integer, and for $\Omega:= B_{1-6\tau}\times (-1,-9\tau^2]$, there holds 
\begin{align*}
|\{(x,t)\in \Omega: u\ge 1\}|&\ge |\{(x,t)\in Q_1: u\ge 1\}|-|Q_1\setminus\Omega|\\
&>\mu|Q_1|-C(n)\tau\\
&>\frac{\mu}{2} |Q_1|,
\end{align*}
where $C(n)$ is some positive constant depending on $n$. Note that this choice of $\tau$ depends on $\mu$ and $n$ only. Then, we use $N$ cylinders $Q^{(j)}\subset Q_1$, $Q^{(j)}\cap\Omega\neq\emptyset$, $j=1,2,\cdots, N$, all of which are of the same size as $Q_\tau$, to cover $\Omega$ in the way of covering the slices $B_{1-6\tau}\times(-1+(k-1)\tau^2, -1+k\tau^2]$ one by one for $k=1,2,\cdots,1/\tau^2-9$. This integer $N$ depends only on $\tau$ and $n$. Then there exists at least one  cylinder, which is denoted as $Q_\tau(x_0,t_0)=Q_\tau+(x_0,t_0)$ for some $(x_0,t_0)\in B_{1-5\tau}\times (-1+\tau^2,-8\tau^2]$, such that
\[
 |\{(x,t)\in Q_\tau(x_0,t_0): u\ge 1\}|\ge \frac{\mu}{2N} |Q_1|,
\]
since otherwise,
\begin{align*}
 |\{(x,t)\in \Omega: u\ge 1\}|\le |\bigcup_{j=1}^N \{(x,t)\in Q^{(j)}: u\ge 1\}| \le \sum_{j=1}^N| \{(x,t)\in Q^{(j)}: u\ge 1\}|<\frac{\mu}{2} |Q_1|,
\end{align*}
which is a contradiction. By Theorem \ref{thm:weak harnack}, there exists $m>0$ depending only on $\mu, n,\lambda$ and $\Lambda$ such that
\[
u\ge m\quad\mbox{in }Q_\tau(x_0,t_0+2\tau^2).
\]
Then by applying Lemma 4.1 in \cite{FS} to $m-u$, we obtain that 
\[
u\ge\gamma\quad\mbox{in }Q_\tau
\]
for some positive $\gamma$ depending only on $\mu, n,\lambda$ and $\Lambda$.
\end{proof}

A consequence of Theorem \ref{thm:weak harnack} and Theorem \ref{thm:local max principle} is the following interior H\"older estimate by Krylov and Safonov \cite{KS}. 

\begin{thm}[Interior H\"older estimates]\label{thm:interior holder}
 Let $u\in C(Q_1)$ be a solution of \eqref{eq:linear parabolic} satisfying \eqref{eq:ellipticity}. Then there exist two positive constants $\alpha$ (small) and $C$ (large), both of which depend only on $n,\lambda$ and $\Lambda$, such that
\[
 \|u\|_{C^\alpha(Q_{1/2})}\le C \osc_{Q_1} u
\]
\end{thm}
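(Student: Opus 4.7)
The plan is to derive this from the improvement of oscillation (Proposition \ref{prop:io}) by a standard Krylov--Safonov iteration. Since \eqref{eq:linear parabolic} is linear and homogeneous, for any solution $u$ both $u - \inf_{Q_1} u$ and $\sup_{Q_1} u - u$ are non-negative solutions, and hence in particular non-negative supersolutions to which Proposition \ref{prop:io} applies.

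The first step is a single-scale oscillation decay: there exist $\tau, \eta \in (0,1)$ depending only on $n, \lambda, \Lambda$ such that $\osc_{Q_\tau} u \le \eta \osc_{Q_1} u$ for every solution $u$ of \eqref{eq:linear parabolic} in $Q_1$. To prove it, normalize so that $\inf_{Q_1} u = 0$ and $\sup_{Q_1} u = 1$. At least one of the sets $\{(x,t) \in Q_1 : u \ge 1/2\}$ or $\{(x,t) \in Q_1 : u \le 1/2\}$ has measure at least $|Q_1|/2$. Applying Proposition \ref{prop:io} with $\mu = 1/2$ to the non-negative supersolution $2u$ in the first case, or to $2(1-u)$ in the second, yields either $u \ge \gamma/2$ or $u \le 1 - \gamma/2$ throughout $Q_\tau$, where $\tau$ and $\gamma$ are the constants produced by Proposition \ref{prop:io} for $\mu = 1/2$. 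Either way, $\osc_{Q_\tau} u \le 1 - \gamma/2 =: \eta$.

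The second step is iteration by parabolic rescaling. For every base point $(x_0, t_0) \in Q_{1/2}$, the translated cylinder $(x_0, t_0) + Q_{1/2}$ lies inside $Q_1$, so the rescaled function $v(y,s) := u(x_0 + (\tau/2) y,\, t_0 + (\tau/2)^2 s)$ solves an equation of the same form on $Q_1$ with the same ellipticity bounds $\lambda, \Lambda$. Applying the single-scale decay inductively gives
\[
\osc_{(x_0, t_0) + Q_{(\tau/2)\tau^k}} u \le \eta^k \osc_{Q_1} u, \qquad k = 0, 1, 2, \dots
\]
A routine interpolation across these dyadic parabolic scales then yields, for all $(x,t),(x_0,t_0) \in Q_{1/2}$,
\[
|u(x,t) - u(x_0,t_0)| \le C \bigl(\max(|x - x_0|,\, \sqrt{|t - t_0|}\,)\bigr)^\alpha \osc_{Q_1} u,
\]
with $\alpha := \log(1/\eta)/\log(1/\tau) \in (0,1)$ and $C$ depending only on $n, \lambda, \Lambda$. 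This is exactly the desired Hölder estimate on $Q_{1/2}$ in the parabolic metric.

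There is no serious obstacle here; the only point requiring care is the geometric bookkeeping needed to ensure that for every base point $(x_0, t_0) \in Q_{1/2}$ the entire nested sequence of parabolic cylinders $(x_0,t_0) + Q_{(\tau/2)\tau^k}$ fits inside $Q_1$, so that each step of the iteration is legitimate. This is precisely the reason that the conclusion is formulated on the smaller cylinder $Q_{1/2}$ rather than on all of $Q_1$.
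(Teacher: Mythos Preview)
The paper does not actually prove this theorem; it is stated as the classical Krylov--Safonov interior H\"older estimate and simply introduced as ``a consequence of Theorem \ref{thm:weak harnack} and Theorem \ref{thm:local max principle}'' with a citation to \cite{KS}. Your derivation via Proposition \ref{prop:io} and parabolic rescaling is the standard argument and is correct. One small technicality: Proposition \ref{prop:io} requires the \emph{strict} inequality $|\{u\ge 1\}|>\mu|Q_1|$, while your dichotomy only guarantees that one of the two level sets has measure \emph{at least} $|Q_1|/2$. This is harmless---just take, say, $\mu=1/3$, since the union of $\{u\ge 1/2\}$ and $\{u\le 1/2\}$ is all of $Q_1$ and hence at least one of them has measure $\ge |Q_1|/2>|Q_1|/3$---but it is worth making explicit.
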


Here, we write $\osc_Q\, u:= \sup_Q u - \inf_Q u$. Note that by adding or subtracting an appropriate constant, the estimate in the previous theorem is equivalent to
\[ \|u\|_{C^\alpha(Q_{1/2})}\le C \|u\|_{L^\infty(Q_1)}.\]

Meanwhile, we shall also use a boundary regularity property.  For two real numbers $a$ and $b$, we denote $$a\vee b=\max(a,b),\  a\wedge b=\min(a,b).$$ We also denote $$\partial_p Q_r=(\partial B_r\times(-r^2,0))\cup (B_r\times \{(x,t): t=-r^2\})$$ as the so-called parabolic boundary of $Q_r$.

\begin{prop}[Boundary estimates]\label{prop:boundary regularity}
 Let $u\in C(\overline Q_1)$ be a solution of \eqref{eq:linear parabolic} satisfying \eqref{eq:ellipticity}. Let $\varphi:=u|_{\partial_p Q_1}$ and let $\rho$ be a modulus of continuity of $\varphi$. Then there exists another modulus of continuity $\rho^*$ depending only on $n,\lambda,\Lambda, \rho, \|\varphi\|_{L^\infty(\partial_p Q_1)}$ such that
\[
 |u(x,t)-u(y,s)|\le \rho^*(|x-y|\vee\sqrt{|t-s|})
\]
for all $(x,t), (y,s)\in \overline Q_1$.
\end{prop}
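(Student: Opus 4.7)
The plan is to combine Theorem \ref{thm:interior holder} with a barrier argument up to the parabolic boundary. The key intermediate step is a uniform boundary continuity: for every $\varepsilon > 0$ there exists $\delta > 0$, depending only on $\varepsilon$, $n$, $\lambda$, $\Lambda$, $\rho$, and $\|\varphi\|_{L^\infty(\partial_p Q_1)}$ (in particular, not on the choice of boundary point), such that $|u(x, t) - \varphi(x_0, t_0)| < \varepsilon$ whenever $(x_0, t_0) \in \partial_p Q_1$ and $(x, t) \in \overline{Q}_1$ satisfy $|x - x_0| \vee \sqrt{|t - t_0|} < \delta$. Once this pointwise boundary estimate is in hand, the global modulus $\rho^{*}$ will follow by a standard distance-to-boundary patching with the interior H\"older estimate.

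For the boundary step, fix $(x_0, t_0)$ and $\varepsilon > 0$. First choose $r = r(\varepsilon, \rho)$ so that $|\varphi(x, t) - \varphi(x_0, t_0)| < \varepsilon/2$ for every $(x, t) \in \partial_p Q_1$ with $|x - x_0| + \sqrt{|t - t_0|} \le 2r$. I then construct a nonnegative classical supersolution $W$ on $\overline{Q}_1$ (meaning $W_t - a_{ij}\partial_{ij}W \ge 0$ for every coefficient field satisfying \eqref{eq:ellipticity}), continuous at $(x_0, t_0)$ with $W(x_0, t_0) = 0$, and such that $W \ge 2\|\varphi\|_{L^\infty(\partial_p Q_1)}$ on $\partial_p Q_1$ outside the parabolic $r$-neighborhood of $(x_0, t_0)$. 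For a lateral point $|x_0| = 1$ the building block is the time-independent quadratic $(1 - |x|^2) + \tfrac{1}{2}|x - x_0|^2$, which is a supersolution for every admissible $(a_{ij})$ (one computes $a_{ij}\partial_{ij}[(1 - |x|^2) + \tfrac{1}{2}|x - x_0|^2] = -\text{tr}(a_{ij}) \le -n\lambda < 0$), vanishes only at $x_0$ on $\overline{B}_1$, and is bounded below there by $\tfrac{1}{2}|x - x_0|^2$; a suitable polynomial correction in $(t - t_0)$ handles the temporal direction, and an overall scaling $M = M(r, \|\varphi\|_{L^\infty})$ produces the required lower bound outside the $r$-neighborhood. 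For a bottom point $(x_0, -1)$ with $x_0 \in B_1$, the explicit choice $W(x, t) = A(t + 1) + \kappa|x - x_0|^2$ with $A \ge 2\kappa n\Lambda$ is directly a supersolution vanishing at $(x_0, -1)$; the parabolic corner $\partial B_1 \times \{-1\}$ is handled by combining both constructions. The comparison principle applied to $u - \varphi(x_0, t_0) - \varepsilon/2$ and $W$, together with the symmetric argument using $-W$ from below, yields $|u(x, t) - \varphi(x_0, t_0)| \le \varepsilon/2 + W(x, t)$ on $\overline{Q}_1$; the continuity of $W$ at $(x_0, t_0)$ then produces the required $\delta$.

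To pass from this to a modulus on all of $\overline{Q}_1$, I combine the boundary estimate with Theorem \ref{thm:interior holder} via a dichotomy on distance to $\partial_p Q_1$. For $(x, t), (y, s) \in \overline{Q}_1$ with parabolic distance $D = |x - y| \vee \sqrt{|t - s|}$: either both points lie at parabolic distance at least $\sqrt{D}$ from $\partial_p Q_1$, in which case rescaling Theorem \ref{thm:interior holder} to a cylinder of radius $\sqrt{D}/2$ around one of them yields $|u(x, t) - u(y, s)| \le C \|u\|_{L^\infty(Q_1)} D^{\alpha/2}$ for some $\alpha = \alpha(n, \lambda, \Lambda) \in (0, 1)$; or else at least one of them, say $(x, t)$, lies within parabolic distance $\sqrt{D}$ of some $(x_0, t_0) \in \partial_p Q_1$, in which case triangulating through $(x_0, t_0)$ and invoking the boundary modulus (twice if $(y, s)$ is also near the boundary, otherwise combined with the interior estimate for $(y, s)$) bounds the difference by a quantity of the form $2\omega(2\sqrt{D}) + C\|u\|_{L^\infty} D^{\alpha/2}$, where $\omega$ is the boundary modulus built in the previous step. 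Merging the two cases gives the desired $\rho^{*}$. The uniform bound $\|u\|_{L^\infty(Q_1)} \le \|\varphi\|_{L^\infty(\partial_p Q_1)}$ used throughout is the classical parabolic maximum principle.

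The main obstacle is the barrier construction at lateral (and corner) boundary points, which must be uniform over the full ellipticity class and uniform in the choice of $(x_0, t_0)$; in particular the parabolic corner $\partial B_1 \times \{-1\}$ requires some care. Once the barriers are in place, the comparison principle, the interior rescaling, and the distance-to-boundary dichotomy are routine.
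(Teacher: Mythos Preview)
Your strategy—barriers at boundary points, comparison, then patching with the interior H\"older estimate via a distance-to-boundary dichotomy—is exactly the paper's approach in Appendix~B. The barriers differ in detail: for lateral points the paper uses a rescaled version of $\min(\delta^{-1/2}\sqrt{(|x|-1)^+}-t,\,1)$ centered at an exterior point, and because that barrier only controls $s\le t_0$, the lateral estimate is proved in two stages (backward time via Lemma~B.2, then forward time by applying the bottom-type barrier of Lemma~B.3 to the sub-cylinder $B_1\times(t_0,0]$ in Lemma~B.4). Your quadratic $(1-|x|^2)+\tfrac12|x-x_0|^2$ is a cleaner building block; if by the unspecified ``polynomial correction in $(t-t_0)$'' you take $A(t-t_0)^2$ with the scaling chosen so that $Mn\lambda\ge 2A$, then $W=M[(1-|x|^2)+\tfrac12|x-x_0|^2]+A(t-t_0)^2$ is a nonnegative supersolution on all of $Q_1$, vanishes at $(x_0,t_0)$, and grows in \emph{both} time directions, so you avoid the paper's two-stage split entirely. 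Note that a linear correction $c(t_0-t)$ would not work, since it becomes negative for $t>t_0$ and cannot dominate on the lateral boundary there; you should make the quadratic choice explicit. The final interior/boundary patching is essentially the same as the paper's.
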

The above proposition is an adaptation of Proposition 4.14 in \cite{CC} for parabolic equations, whose proof will be given in Appendix \ref{sec:appB}. 

Another useful result is the $W^{2,\delta}$ estimate for parabolic equations, which can be found in Theorem 1.9 and Theorem 2.3 of Krylov \cite{Krylov}. Such estimates were first discovered by F.-H. Lin \cite{Lin86} for elliptic equations. 
\begin{thm}[$W^{2,\delta}$ estimates]\label{thm:W 2 delta}
 Let $u\in C(\overline Q_1)\cap C^2(Q_1)$ be a solution of \eqref{eq:linear parabolic} satisfying \eqref{eq:ellipticity}. Then there exist two positive constants $\delta$ (small) and $C$ (large), both of which depend only on $n,\lambda$ and $\Lambda$, such that
\[
 \|\nabla u\|_{L^\delta(Q_{1})}+ \|\nabla^2 u\|_{L^\delta(Q_{1})}\le C \|u\|_{L^\infty(\partial_p Q_1)}.
\]
\end{thm}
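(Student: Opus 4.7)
My plan is to follow the strategy originating in F.-H. Lin \cite{Lin86} for the elliptic case, as adapted to parabolic equations by Krylov and by Tso. After normalizing so that $\|u\|_{L^\infty(\partial_p Q_1)} \le 1$, I would introduce the contact set $G_K \subset Q_1$ consisting of points $(x_0, t_0)$ at which $u$ is sandwiched, in a fixed parabolic neighborhood, between two paraboloids of opening $K$, namely polynomials of the form $P^\pm(x,t) = a^\pm + b^\pm \cdot (x - x_0) + c^\pm (t - t_0) + \tfrac{1}{2}(x - x_0)^\top M^\pm (x - x_0)$ with $\|M^\pm\| \le K$, touching $u$ from above and below respectively and agreeing with $u$ at $(x_0, t_0)$. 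At any $C^2$ point of $G_K$, elementary second-order Taylor expansion yields $|\nabla u(x_0,t_0)|, |\nabla^2 u(x_0, t_0)| \le CK$. Consequently, the theorem reduces to the distribution-function estimate
\[
 |\{(x,t) \in Q_1 : (x,t) \notin G_K\}| \le C K^{-\delta_0}
\]
for some $\delta_0 = \delta_0(n, \lambda, \Lambda) > 0$. A layer-cake integration then gives $\|\nabla^2 u\|_{L^\delta(Q_1)} \le C$ for every $\delta < \delta_0$, and the bound for $\nabla u$ follows similarly (or by combining the $L^\delta$ bound on $\nabla^2 u$ with the $L^\infty$ bound on $u$ via a parabolic Sobolev/interpolation inequality).

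The measure estimate above is obtained by iterating a one-step decay: I would show that there exist $M > 1$ and $\eta \in (0,1)$, depending only on $n, \lambda, \Lambda$, such that $|Q_1 \setminus G_{M^{k+1}}| \le (1-\eta) |Q_1 \setminus G_{M^k}|$, hence $|Q_1 \setminus G_{M^k}| \le C(1-\eta)^k$. Choosing $\delta_0$ so that $M^{\delta_0}(1-\eta) < 1$ converts this geometric decay into the claimed power bound. The one-step decay itself rests on two ingredients: a parabolic Calderón--Zygmund decomposition into anisotropic cubes of shape $r \times r^2$ (space $\times$ time), and, on each stopping cube $Q$ where $G_{M^k}$ already has large density, a lower bound on the density of $G_{M^{k+1}}$ inside $Q$. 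The latter bound is the parabolic Aleksandrov--Bakelman--Pucci (Tso) estimate applied to a suitable parabolic convex/concave envelope of $u$, combined with a rescaling of the equation to the unit cylinder. The contact set of the envelope has controlled measure, and on it one can construct the required touching paraboloids by barrier arguments.

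The principal obstacle is the parabolic ABP step together with the asymmetric role of past and future in time. Unlike the elliptic case, the envelope defining the contact set must be built to respect the causal direction: a paraboloid touching $u$ from above at $(x_0, t_0)$ is required to dominate $u$ only at earlier times $t < t_0$, and the Monge--Amp\`ere-type determinant appearing in the ABP estimate must incorporate the time derivative through the coefficient $c^\pm$. Keeping track of the anisotropic scaling $(x,t) \mapsto (rx, r^2 t)$ throughout the Calder\'on--Zygmund stopping-time argument, and ensuring that the contact-set density transfers correctly from parent cube to child cube, is the genuinely parabolic technicality. Once this is settled, the remaining arguments are a mechanical adaptation of the elliptic Caffarelli--Cabr\'e proof, and one recovers the statement of the theorem with constants $\delta$ and $C$ depending only on $n, \lambda, \Lambda$.
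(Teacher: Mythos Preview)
The paper does not actually supply a proof of this theorem: it is stated as a known result and attributed to Theorem~1.9 and Theorem~2.3 of Krylov \cite{Krylov}, with the elliptic precursor credited to F.-H.~Lin \cite{Lin86}. So there is no ``paper's own proof'' to compare against; the authors simply invoke the literature.

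That said, the strategy you outline---contact sets $G_K$ defined via touching parabolic paraboloids, pointwise control of $\nabla u$ and $\nabla^2 u$ on $G_K$, a geometric decay $|Q_1\setminus G_{M^{k+1}}|\le(1-\eta)|Q_1\setminus G_{M^k}|$ obtained from a parabolic Calder\'on--Zygmund covering combined with the Tso/ABP estimate, and a final layer-cake integration---is precisely the approach carried out in the cited references (and in the parabolic $W^{2,\varepsilon}$ theory of L.~Wang). Your identification of the genuinely parabolic difficulty, namely the one-sided causal structure of the touching paraboloids and the anisotropic $(x,t)\mapsto(rx,r^2t)$ scaling in the covering argument, is accurate. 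One small point: the bound $|\nabla u(x_0,t_0)|\le CK$ on the contact set does not follow from the opening $\|M^\pm\|\le K$ alone but from the additional fact that the touching paraboloid stays above (or below) $u$ on a cylinder of \emph{fixed} size, which forces the linear coefficient $b^\pm$ to be $O(K)$; you may want to make that explicit. Otherwise your sketch is a faithful summary of the standard proof.
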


The last one we will use in this paper is a regularity estimate for small perturbation solutions of  fully nonlinear parabolic equations, which was proved by Wang \cite{wang}. Such estimates were first proved by Savin \cite{Savin07} for fully nonlinear elliptic equations.

\begin{thm}[Regularity of small perturbation solutions] \label{l:smallness to regularity}
Let $u$ be a smooth solution of \eqref{eq:main va} in $Q_1$. For each $\gamma\in (0,1)$, there exist two positive constants $\eta$ (small) and $C$ (large), both of which depends only on $\gamma, n$ and $p$, such that if $|u(x,t)-L(x)|\le\eta$ in $Q_1$ for some linear function $L$ of $x$ satisfying $1/2\le|\nabla L|\le 2$, then 
\[
\|u-L\|_{C^{2,\gamma}(Q_{1/2})}\le C.
\]
\end{thm}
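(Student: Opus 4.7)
The plan is to verify that the hypotheses of Wang's small perturbation theorem from \cite{wang} (the parabolic counterpart of Savin's elliptic theorem \cite{Savin07}) are satisfied in our setting, and then invoke that theorem directly. First I would cast \eqref{eq:main va} in the fully nonlinear form $u_t = F(\nabla u, D^2 u)$ with $F(q, M) = \mathrm{tr}(A(q) M)$, where $A(q)$ is the (regularized) coefficient matrix of the form $\delta_{ij} + (p-2) q_i q_j / (|q|^2 + \varepsilon^2)$. On any annulus $\{1/4 \le |q| \le 4\}$, the matrix $A(q)$ is smooth in $q$ with $C^k$ norms depending only on $p$, and its eigenvalues lie in $[\min(p-1,1),\max(p-1,1)]$, yielding uniform parabolicity with constants independent of $\varepsilon$.

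Next, observe that the linear function $L(x)$ is itself an exact solution of \eqref{eq:main va}: both $L_t$ and $D^2 L$ vanish identically, so $L_t = F(\nabla L, D^2 L) = 0$. Hence $|u - L| \le \eta$ says that $u$ is $L^\infty$-close to a smooth exact solution, and the constraint $1/2 \le |\nabla L| \le 2$ keeps $\nabla L$ comfortably inside the smooth, uniformly parabolic region of $F$. Writing $v = u - L$, the difference satisfies $v_t = \tilde F(\nabla v, D^2 v)$ with $\tilde F(q, M) := F(\nabla L + q, M)$, where $\tilde F(0, 0) = 0$ and $\tilde F$ is smooth and uniformly parabolic near $(0, 0)$, with constants depending only on $n$ and $p$.

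With these structural conditions in place, Wang's theorem \cite{wang} applies to $v$ and delivers $\|v\|_{C^{2,\gamma}(Q_{1/2})} \le C$ once $\eta$ is taken sufficiently small. All constants depend only on $\gamma$, $n$, and $p$, because the ellipticity bounds and the $C^k$ norms of $F$ on a neighborhood of $\nabla L$ depend only on $n$ and $p$; crucially, these bounds are independent of the regularization parameter $\varepsilon$, since $\varepsilon$ only modifies $A(q)$ in a neighborhood of $q = 0$, far from our working region. The heart of Wang's argument, which I would not reprove here, is a compactness-and-iteration scheme: assuming the conclusion fails, one rescales a failed configuration and in the limit $\eta \to 0$ obtains a solution of the constant-coefficient linearization $w_t - A_{ij}(\nabla L) \partial_{ij} w = 0$, for which classical Schauder estimates force $C^{2,\gamma}$ regularity and hence a contradiction. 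The main obstacle would be precisely this compactness step were one to reprove it from scratch; but since we invoke \cite{wang} as a black box, only the hypothesis-checking above remains.
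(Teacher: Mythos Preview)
Your proposal is correct and follows essentially the same approach as the paper: the paper's proof simply observes that $L$ is an exact solution of \eqref{eq:main va} and then invokes Corollary~1.2 of \cite{wang}. Your write-up supplies more detail on why Wang's hypotheses are met (uniform parabolicity and smoothness of $F$ near $\nabla L$, independence of constants from $\varepsilon$), but the substance is the same.
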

\begin{proof}
Since $L$ is a solution of \eqref{eq:main va}, the conclusion follows from Corollary 1.2 in \cite{wang}.
\end{proof}

Now let us recall the definition of viscosity solutions to \eqref{eq:main1} (see Definition 2.3 in \cite{BG}). 
\begin{defn}\label{den:viscosity}
An upper (lower, resp.) semi-continuous function $u$ in $Q_1$ is called a viscosity subsolution (supersolution, resp.) of \eqref{eq:main1} in $Q_1$ if for every $\varphi\in C^2(Q_1)$, $u-\varphi$ has a local maximum (minimum, resp.) at $(x_0,t_0)\in Q_1$, then
\[
\varphi_t\le (\ge, resp.) \Delta \varphi + (p-2)|\nabla \varphi|^{-2}\varphi_i\varphi_j\varphi_{ij}
\]
at $(x_0,t_0)$ when $\nabla \varphi(x_0,t_0)\neq 0$, and
\[
\varphi_t\le (\ge, resp.) \Delta \varphi + (p-2)q_iq_j\varphi_{ij}
\]
for some  $q\in\overline B_1\subset\R^n$  at $(x_0,t_0)$  when $\nabla \varphi(x_0,t_0)= 0$.

A function $u\in C(Q_1)$ is called a viscosity solution of \eqref{eq:main1}, if it is both a viscosity subsolution and a viscosity supersolution.
\end{defn}

In order to circumvent the inconveniences of the lack of smoothness of viscosity solutions, we choose to approximate the equation \eqref{eq:main1} with a regularized problem. For $\va>0$, let $u$ be smooth and  satisfy that
\be\label{eq:main va}
u_t=a_{ij}(\nabla u) u_{ij}\quad\mbox{in }Q_1,
\ee
where 
\begin{equation}\label{eq:aij}
 a_{ij} (q)=\delta_{ij}+(p-2)\frac{q_iq_j}{|q|^2+\va^2}\quad\mbox{for }q\in\R^n.
\end{equation}
This equation \eqref{eq:main va} is uniformly parabolic and has smooth solutions for all $\eps>0$. Such regularization techniques have been used before for the $p$-Laplace equation in several contexts. For example, see \cite{BG, ES1, Lewis}. We will obtain a priori estimates that are independent of $\eps$ and finally show that they apply to the original equation \eqref{eq:main1} through approximations. 

In the step of approximation, we will use the following two properties on the viscosity solutions of \eqref{eq:main1}. The first one is the comparison principle, which can be found in Theorem 3.2 in \cite{BG}.

\begin{thm}[Comparison principle]\label{thm:comparison principle}
Let $u$ and $v$ be a viscosity subsolution and a viscosity supersolution of \eqref{eq:main1} in $Q_1$, respectively. If $u\le v$ on $\partial_pQ_1$, then $u\le v$ in $\overline Q_1$.
\end{thm}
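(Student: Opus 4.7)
The plan is to prove the comparison principle by the doubling-of-variables method, using a \emph{quartic} spatial penalty to accommodate the singularity of the coefficient matrix $a_{ij}(q) = \delta_{ij} + (p-2)q_iq_j/|q|^2$ at $q = 0$.

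Argue by contradiction: assume $\sup_{\overline Q_1}(u - v) > 0$. I first replace $u$ by a strict subsolution $u_\eta := u - \eta(2 + t)$ (with an auxiliary term $-\delta/(1-t^2)$ added if needed to push the supremum away from the top $\{t = 0\}$), so that $u_\eta$ satisfies the subsolution inequality with a uniform strict gap of size $\eta > 0$ and $\sup_{\overline Q_1}(u_\eta - v) > 0$ is still attained at some interior $(x_*, t_*)$. Then I double variables with a quartic spatial penalty,
\[
\Phi_\alpha(x, y, t, s) = u_\eta(x, t) - v(y, s) - \frac{\alpha}{4}|x-y|^4 - \frac{\alpha}{2}(t-s)^2,
\]
and let $(x_\alpha, y_\alpha, t_\alpha, s_\alpha)$ attain its supremum on $\overline Q_1 \times \overline Q_1$. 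Standard estimates give $\alpha|x_\alpha - y_\alpha|^4 + \alpha(t_\alpha - s_\alpha)^2 \to 0$ and $(x_\alpha, t_\alpha), (y_\alpha, s_\alpha) \to (x_*, t_*)$. The parabolic Crandall--Ishii lemma then produces jets $(\tau_\alpha, p_\alpha, X_\alpha) \in \overline{\mathcal P}^{2,+} u_\eta(x_\alpha, t_\alpha)$ and $(\tau_\alpha, p_\alpha, Y_\alpha) \in \overline{\mathcal P}^{2,-} v(y_\alpha, s_\alpha)$ with common time part $\tau_\alpha = \alpha(t_\alpha - s_\alpha)$, common spatial part $p_\alpha = \alpha|x_\alpha - y_\alpha|^2(x_\alpha - y_\alpha)$, and a matrix inequality controlled by the penalty Hessian $D^2\psi(x_\alpha - y_\alpha) = \alpha|x_\alpha - y_\alpha|^2 I + 2\alpha(x_\alpha - y_\alpha)(x_\alpha - y_\alpha)^T$.

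If $x_\alpha \ne y_\alpha$ then $p_\alpha \ne 0$, and the non-degenerate form of the viscosity inequalities applies. Subtracting the subsolution inequality for $u_\eta$ from the supersolution inequality for $v$ and using the uniform ellipticity of $a_{ij}(p_\alpha)$ together with $X_\alpha \le Y_\alpha$ gives $\eta \le a_{ij}(p_\alpha)(X_\alpha - Y_\alpha)_{ij} \le 0$, a contradiction.

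The main obstacle is the degenerate case $x_\alpha = y_\alpha$, and this is where the quartic choice of the penalty becomes essential: at this point the penalty Hessian vanishes, so the Crandall--Ishii matrix inequality forces $X_\alpha \le 0$ and $Y_\alpha \ge 0$. The ``exists $q \in \overline B_1$'' form of the degenerate viscosity inequalities, passed to the closure jets via upper/lower semicontinuity of $X \mapsto \sup_{|q|\le 1}(p-2)q^T X q$, reads
\[
\tau_\alpha + \eta \le \mathrm{tr}(X_\alpha) + \sup_{|q|\le 1}(p-2)\, q^T X_\alpha q, \qquad \tau_\alpha \ge \mathrm{tr}(Y_\alpha) + \inf_{|q|\le 1}(p-2)\, q^T Y_\alpha q.
\]
A short eigenvalue computation, splitting $p \ge 2$ and $1 < p < 2$ and using the elementary fact that $|\mathrm{tr}(X)| \ge |\lambda_{\min}(X)|$ when $X$ is negative semidefinite, shows that the subsolution right-hand side is $\le 0$ and the supersolution right-hand side is $\ge 0$; this is where the assumption $p > 1$ is used. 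Hence $\tau_\alpha + \eta \le 0 \le \tau_\alpha$, the required contradiction. With the standard quadratic doubling $|x-y|^2/2$ one instead has $D^2\psi(0) = \alpha I$, and the matrix inequality only gives $X_\alpha, -Y_\alpha \le 3\alpha I$, which scales with $\alpha$ and cannot close the degenerate case; replacing the quadratic by the quartic penalty is the crux.
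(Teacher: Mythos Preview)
Your argument is correct and essentially complete. Note, however, that the paper does not give its own proof of this statement: it simply quotes Theorem~3.2 of Banerjee--Garofalo \cite{BG}. What you have written is a self-contained proof along the lines of the standard doubling-of-variables argument for singular operators, and the quartic spatial penalty is indeed the known device (going back at least to the level-set mean-curvature-flow literature of Chen--Giga--Goto and Evans--Spruck) that makes the degenerate case $p_\alpha=0$ tractable; this is essentially what \cite{BG} does as well.

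Two small remarks. First, your auxiliary barrier $-\delta/(1-t^2)$ does not blow up anywhere on $(-1,0]$, so it will not push the maximum away from $\{t=0\}$; the usual choice is something like $\delta/t$ (recall $t<0$) or simply to restrict to $B_1\times(-1,T]$ for $T<0$ and let $T\uparrow 0$ at the end. Second, the passage of the ``there exists $q\in\overline B_1$'' condition to closure jets is slightly more delicate than a semicontinuity statement: when the approximating gradients $p_n\neq 0$ one uses $\hat q_n=p_n/|p_n|\in\mathbb S^{n-1}$ and extracts a convergent subsequence, while when $p_n=0$ one already has $q_n\in\overline B_1$ from the definition; either way compactness of $\overline B_1$ delivers the limiting $q$. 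Your eigenvalue computation in the degenerate case (splitting $p\ge 2$ and $1<p<2$, using $X_\alpha\le 0$, $Y_\alpha\ge 0$) is clean, and it is precisely here that $p>1$ enters, via the coefficient $p-1>0$ in front of the extremal eigenvalue.
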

The second one is the stability of viscosity solutions of \eqref{eq:main1}. 
\begin{thm}[Stability]\label{thm:stability}
Let $\{u_k\}$ be a sequence of viscosity subsolutions of \eqref{eq:main va} in $Q_1$ with $\va_k\ge 0$ that $\va_k\to 0$, and $u_k$ converge locally uniformly to $u$ in $Q_1$. Then $u$ is a viscosity subsolution of \eqref{eq:main1} in $Q_1$.
\end{thm}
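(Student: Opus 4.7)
The plan is to argue in the standard way for viscosity subsolutions, exploiting the fact that the regularized coefficients $a_{ij}(q)$ from \eqref{eq:aij} can be written in a uniformly bounded way even when $q$ passes through $0$. Take any $\varphi\in C^2(Q_1)$ such that $u-\varphi$ attains a local maximum at an interior point $(x_0,t_0)\in Q_1$. After replacing $\varphi$ by $\varphi(x,t)+|x-x_0|^4+(t-t_0)^2$ (which does not change $\varphi_t$, $\nabla\varphi$, $\nabla^2\varphi$ at $(x_0,t_0)$), I may assume that the local maximum is strict. Then the local uniform convergence $u_k\to u$ produces points $(x_k,t_k)\to (x_0,t_0)$ at which $u_k-\varphi$ attains a local maximum.

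Next, for each $k$ the function $u_k$ is a viscosity subsolution of the uniformly parabolic, smooth equation \eqref{eq:main va} with parameter $\va_k$, so applying the definition with test function $\varphi$ at $(x_k,t_k)$ yields
\[
\varphi_t(x_k,t_k)\le \Bigl[\delta_{ij}+(p-2)\frac{\varphi_i\varphi_j}{|\nabla \varphi|^2+\va_k^2}\Bigr](x_k,t_k)\,\varphi_{ij}(x_k,t_k).
\]
The key algebraic observation is that if I set
\[
q_k=\frac{\nabla \varphi(x_k,t_k)}{\sqrt{|\nabla\varphi(x_k,t_k)|^2+\va_k^2}}\in\overline B_1,
\]
then the coefficient matrix equals $\delta_{ij}+(p-2)q_{k,i}q_{k,j}$ regardless of whether $\nabla\varphi(x_k,t_k)$ vanishes. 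Thus the inequality becomes
\[
\varphi_t(x_k,t_k)\le \Delta\varphi(x_k,t_k)+(p-2)q_{k,i}q_{k,j}\varphi_{ij}(x_k,t_k).
\]

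Finally, I would split into two cases to match Definition \ref{den:viscosity}. If $\nabla\varphi(x_0,t_0)\neq 0$, then since $\va_k\to 0$ and $\nabla\varphi(x_k,t_k)\to\nabla\varphi(x_0,t_0)$, one has $q_k\to \nabla\varphi/|\nabla\varphi|$, and passing to the limit in the displayed inequality yields exactly the first alternative of Definition \ref{den:viscosity}. If instead $\nabla\varphi(x_0,t_0)=0$, then compactness of $\overline B_1$ lets me extract a subsequence with $q_k\to q$ for some $q\in\overline B_1$, and the limit of the displayed inequality is precisely the second alternative of Definition \ref{den:viscosity} with this $q$. The only substantive point in the argument is handling the discontinuity of the operator at $\nabla u=0$, and rewriting the coefficients in terms of the bounded vectors $q_k$ is exactly what removes that obstacle.
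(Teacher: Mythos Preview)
Your argument is correct and is essentially the standard stability argument for viscosity solutions; the paper itself does not give a self-contained proof here but simply cites Evans--Spruck \cite{ES1}, whose reasoning proceeds along the same lines. Your device of writing the coefficient matrix as $\delta_{ij}+(p-2)q_{k,i}q_{k,j}$ with $q_k\in\overline B_1$ is exactly the clean way to absorb the singularity, and it matches the spirit of the cited references. One very minor remark: since the statement allows $\va_k=0$, your formula for $q_k$ is undefined when $\va_k=0$ and $\nabla\varphi(x_k,t_k)=0$ simultaneously; in that case you would instead take $q_k\in\overline B_1$ to be the vector furnished directly by Definition~\ref{den:viscosity} applied to $u_k$, and the rest of the argument is unchanged.
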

\begin{proof}
We refer to the proof of Theorem 2.7 in \cite{ES1} or the second paragraph of the proof of Theorem 4.2 in \cite{ES1}.
\end{proof}

To summarize, we would like to mention what each of these results in this section will be used for in our proof of Theorem \ref{thm:mainholdergradient}. The local maximum principle in Theorem \ref{thm:local max principle} and the $W^{2,\delta}$ estimates in Theorem \ref{thm:W 2 delta} will be used to prove Lipschitz estimates. The form of improvement of oscillation in Proposition \ref{prop:io}, the interior H\"older estimates in Theorem \ref{thm:interior holder} and the regularity of small perturbation solutions in Theorem \ref{l:smallness to regularity} are the key ingredients in our proof of the H\"older gradient estimates. The boundary estimates in Proposition \ref{prop:boundary regularity}, as well as the comparison principle in Theorem \ref{thm:comparison principle} and the stability property in Theorem \ref{thm:stability} will only be used in the technical approximation step, which do not affect the proof of the a priori estimates.

\section{Lipschitz estimates in spatial variables}\label{sec:lip}

 The interior Lipschitz estimate for solutions of \eqref{eq:main va} in spatial variables was essentially obtained before by Does \cite{KD}. Here, we will  provide an alternative proof. Our proof appears much shorter since it uses Theorem \ref{thm:local max principle} and Theorem \ref{thm:W 2 delta}, whereas, the proof given by Does \cite{KD} is based on the Bernstein technique and uses only elementary
tools.

The following auxiliary lemma follows from a direct calculation. We postpone its proof to Appendix \ref{A}.
\begin{lem}\label{lem:subsolution}
For a smooth solution $u$ of \eqref{eq:main va} and $\varphi:=(|\nabla u |^2+\va^2)^\frac{p}{2}$ we have
\begin{align*}
\big(\partial_t -a_{ij}(\nabla u) \partial_{ij}\big)\varphi\le 0,
\end{align*}
where $a_{ij}(\nabla u)$ is given in \eqref{eq:aij}.
\end{lem}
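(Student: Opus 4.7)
The plan is a direct (but index-heavy) computation. Introduce the shorthand $w:=|\nabla u|^2+\varepsilon^2$, so $\varphi=w^{p/2}$, and let $v_k:=u_l u_{lk}$ and $s:=u_iv_i=u_iu_ju_{ij}$. Computing derivatives gives $\varphi_t = p\,w^{p/2-1}u_l u_{lt}$ and
\[
\varphi_{ij} = p(p-2)\,w^{p/2-2}v_iv_j + p\,w^{p/2-1}u_{li}u_{lj} + p\,w^{p/2-1}u_l u_{lij}.
\]
Next, I differentiate the equation $u_t=a_{ij}(\nabla u)u_{ij}$ in $x_l$ and contract with $u_l$; after using $u_{lij}=u_{ijl}$, the third-order term exactly matches the $u_l u_{lij}$-piece of $a_{ij}\varphi_{ij}$ and drops out, leaving
\[
(\partial_t-a_{ij}(\nabla u)\partial_{ij})\varphi = p\,w^{p/2-1}\,v_k\,(\partial_{q_k}a_{ij})(\nabla u)\,u_{ij} - p(p-2)\,w^{p/2-2}\,a_{ij}v_iv_j - p\,w^{p/2-1}\,a_{ij}u_{li}u_{lj}.
\]

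The second step is to simplify the right-hand side by plugging in
\[
\partial_{q_k}a_{ij}(q) = (p-2)\bigl[w^{-1}(\delta_{ki}q_j+\delta_{kj}q_i) - 2w^{-2}q_iq_jq_k\bigr]
\]
together with the contractions $a_{ij}v_iv_j = |v|^2+(p-2)w^{-1}s^2$ and $a_{ij}u_{li}u_{lj}=|\nabla^2 u|^2+(p-2)w^{-1}|v|^2$. Collecting the coefficients of the three types of terms that arise, namely $w^{p/2-2}|v|^2$, $w^{p/2-3}s^2$, and $w^{p/2-1}|\nabla^2 u|^2$, I expect the $|v|^2$ contributions to cancel exactly ($2p(p-2)-p(p-2)-p(p-2)=0$), producing the clean identity
\[
(\partial_t - a_{ij}(\nabla u)\partial_{ij})\varphi = -p\,w^{p/2-1}|\nabla^2u|^2 - p^2(p-2)\,w^{p/2-3}s^2.
\]

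Finally, I check the sign. When $p\ge 2$ both terms on the right are manifestly $\le 0$, so the conclusion is immediate. The only delicate case is $1<p<2$, where $-p^2(p-2)>0$ and one must absorb the $s^2$-term into the $|\nabla^2u|^2$-term. Two applications of Cauchy--Schwarz do the job: $s^2=(u_iv_i)^2\le|\nabla u|^2|v|^2$, and since $v=(\nabla^2 u)\nabla u$, also $|v|^2\le|\nabla^2 u|^2|\nabla u|^2$; hence $s^2\le|\nabla u|^4|\nabla^2 u|^2\le w^2|\nabla^2 u|^2$. Substituting yields
\[
(\partial_t-a_{ij}(\nabla u)\partial_{ij})\varphi \le -p\bigl(1+p(p-2)\bigr)w^{p/2-1}|\nabla^2 u|^2 = -p(p-1)^2\,w^{p/2-1}|\nabla^2 u|^2 \le 0,
\]
which completes the proof. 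I expect the main obstacle to be purely notational: keeping the index bookkeeping straight so that the $|v|^2$-terms really do cancel and one lands on the two-term identity above; once that identity is in hand, the sign check for $p<2$ is short but must not be missed.
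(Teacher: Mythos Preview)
Your proof is correct and follows essentially the same route as the paper: a direct computation that collapses to the identity $(\partial_t-a_{ij}\partial_{ij})\varphi = -p\,w^{p/2-1}|\nabla^2 u|^2 - p^2(p-2)\,w^{p/2-3}s^2$ (the paper writes this as $pV^{(p-6)/2}\big(p(2-p)(\Delta_\infty u)^2 - |\nabla^2 u|^2V^2\big)$), followed by the Cauchy--Schwarz bound $s^2\le w^2|\nabla^2 u|^2$. Your organization---differentiating the equation and contracting with $u_l$ so that the third-order terms visibly cancel---is a clean way to reach the same two-term identity.
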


We now present the interior Lipschitz estimate.

\begin{thm}\label{thm:lip}
Let $u$ be a smooth solution of \eqref{eq:main va} in $Q_1$. Then there exists a positive constant $C$ depending only on $n$ and $p$ such that 
\[
 \|\nabla u\|_{L^\infty(Q_{1/2})}\le C(\|u\|_{L^\infty(Q_1)}+\va).
\]
\end{thm}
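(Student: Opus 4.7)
The plan is to combine Lemma~\ref{lem:subsolution} with the local maximum principle (Theorem~\ref{thm:local max principle}) and the $W^{2,\delta}$ estimate (Theorem~\ref{thm:W 2 delta}). By Lemma~\ref{lem:subsolution} the nonnegative function
\[
\varphi:=(|\nabla u|^2+\va^2)^{p/2}
\]
is a subsolution of the linear equation $\varphi_t-a_{ij}(\nabla u)\partial_{ij}\varphi\le 0$ in $Q_1$. Since $\min(p-1,1)I\le a_{ij}(\nabla u)\le \max(p-1,1)I$ pointwise, this linearized equation is uniformly parabolic with ellipticity constants depending only on $p$, so both Theorem~\ref{thm:local max principle} and Theorem~\ref{thm:W 2 delta} apply to it with constants depending only on $n$ and $p$.

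The first step is to apply Theorem~\ref{thm:local max principle} to $\varphi$ in its scaled form. For any $(x_0,t_0)\in Q_{1/2}$, the parabolic cylinder $Q_{1/4}(x_0,t_0)$ is contained in $Q_{3/4}$; rescaling this cylinder to $Q_1$ and invoking Theorem~\ref{thm:local max principle} yields $\varphi(x_0,t_0)\le C\|\varphi\|_{L^\gamma(Q_{3/4})}$ for every $\gamma>0$, with $C$ depending only on $\gamma$, $n$ and $p$. Taking the supremum in $(x_0,t_0)$ gives
\[
\sup_{Q_{1/2}}\varphi \le C\|\varphi\|_{L^\gamma(Q_{3/4})}.
\]
Now let $\delta>0$ be the exponent provided by Theorem~\ref{thm:W 2 delta}, and choose $\gamma=\delta/p$. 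Using the elementary bound $\varphi^\gamma=(|\nabla u|^2+\va^2)^{\delta/2}\le C(|\nabla u|^{\delta}+\va^{\delta})$, this becomes
\[
\sup_{Q_{1/2}}\varphi \le C\bigl(\|\nabla u\|_{L^\delta(Q_{3/4})}^{p}+\va^{p}\bigr).
\]

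For the second step, since $u$ is smooth in $Q_1$ and hence $C^2$ on $\overline{Q_{3/4}}\subset Q_1$, a rescaling of Theorem~\ref{thm:W 2 delta} applied on $Q_{3/4}$ produces
\[
\|\nabla u\|_{L^\delta(Q_{3/4})}\le C\|u\|_{L^\infty(\partial_pQ_{3/4})}\le C\|u\|_{L^\infty(Q_1)}.
\]
Combining this with the previous inequality, using $|\nabla u|^p\le\varphi$, and taking $p$-th roots gives the desired Lipschitz bound $\|\nabla u\|_{L^\infty(Q_{1/2})}\le C(\|u\|_{L^\infty(Q_1)}+\va)$. No single step is a serious obstacle: Lemma~\ref{lem:subsolution} has already performed the crucial computation by reducing the quasilinear problem~\eqref{eq:main va} to a subsolution of a linear uniformly parabolic equation, after which the Krylov--Safonov local maximum principle and Lin's $W^{2,\delta}$ estimate finish the job. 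The only care required is the exponent bookkeeping, namely the choice $\gamma=\delta/p$, and the elementary translation and scaling needed to apply Theorems~\ref{thm:local max principle} and~\ref{thm:W 2 delta} on cylinders other than $Q_1$.
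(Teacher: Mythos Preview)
Your proof is correct and follows essentially the same approach as the paper: apply Lemma~\ref{lem:subsolution} to see that $\varphi=(|\nabla u|^2+\va^2)^{p/2}$ is a subsolution, use the local maximum principle with exponent $\gamma=\delta/p$, and control $\|\nabla u\|_{L^\delta(Q_{3/4})}$ via the $W^{2,\delta}$ estimate. The only difference is that you spell out the scaling and covering considerations a bit more explicitly than the paper does.
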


\begin{proof}
 Since $u$ satisfies \eqref{eq:main va}, it follows from Theorem \ref{thm:W 2 delta} that there exist two positive constants $\delta$ (small) and $C$ (large) both of which depend only on $n$ and $p$ such that 
\[
 \|\nabla u\|_{L^\delta(Q_{3/4})}\le  C\|u\|_{L^\infty(Q_1)}.
\]
Let $\varphi:=(|\nabla u |^2+\va^2)^\frac{p}{2}$. By Lemma \ref{lem:subsolution} and Theorem \ref{thm:local max principle}, we have
\[
 \begin{split}
  \|\varphi\|_{L^\infty(Q_{1/2})}&\le C \|\varphi\|_{L^{\delta/p} (Q_{3/4})} \le C (\|\nabla u\|^p_{L^\delta(Q_{3/4})}+\va^p)\le  C(\|u\|^p_{L^\infty(Q_1)}+\va^p).
 \end{split}
\]
It follows that
\[
 \|\nabla u\|_{L^\infty(Q_{1/2})}\le C(\|u\|_{L^\infty(Q_1)}+\va).
\]
\end{proof}

\section{H\"older estimates for the spatial gradients}\label{sec:holder}

In this section, we shall prove the H\"older estimate of $\nabla u$ at $(0,0)$. By Theorem \ref{thm:lip} and normalization, we assume that $|\nabla u|\le 1$. The idea is the following. First, we show that if the projection of $\nabla u$ onto the direction $e\in\mathbb S^{n-1}$ is away from $1$ in a positive portion of $Q_1$, then $\nabla u\cdot e$ has improved oscillation in $Q_{\tau}$ for some $\tau>0$. 

Then we analyze according to the following dichotomy:

\begin{itemize}
\item If we can keep scaling around $(0,0)$ and iterate infinitely many times in all directions $e\in\mathbb S^{n-1}$, then it leads to the H\"older continuity of $\nabla u$  at $(0,0)$. 

\item If the iteration stops at, say, the $k$-th step  in some direction $e\in\mathbb S^{n-1}$. This means that $\nabla u$ is close to some fixed vector in a large portion of $Q_{\tau^k}$. We then prove that $u$ is close to some linear function, and the H\"older continuity of $\nabla u$ will follow from Theorem \ref{l:smallness to regularity}.
\end{itemize}

\subsection{Improvement of oscillation}
Since $\nabla u$ is a vector, we shall first obtain an improvement of oscillation for $\nabla u$ projected to an arbitrary direction $e\in\mathbb S^{n-1}$.

\begin{lem}\label{lem:osc2}
Let $u$ be a smooth solution of \eqref{eq:main va} such that $|\nabla u|\le 1$ in $Q_1$. 
For every $0<\ell<1$, $\mu>0$, there exists $\tau>0$ depending only on $\mu$ and $n$, and there exists $\delta>0$ depending only on $n,p,\mu$ and $\ell$ such that for arbitrary $e\in\mathbb{S}^{n-1}$, if
\begin{equation}\label{eq:proj on e positive}
|\{(x,t)\in Q_1: \nabla u\cdot e\le \ell\}|> \mu |Q_1|,
\end{equation}
then
\[
\nabla u\cdot e< 1-\delta\quad\mbox{in }Q_{\tau}.
\]
\end{lem}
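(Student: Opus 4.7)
The plan is to apply the improvement-of-oscillation result, Proposition \ref{prop:io}, to the non-negative auxiliary function
\[
h(x,t) := 1 - \nabla u(x,t)\cdot e.
\]
We have $h \ge 0$ in $Q_1$ because $\nabla u \cdot e \le |\nabla u| \le 1$, and the measure condition \eqref{eq:proj on e positive} reads $|\{h \ge 1 - \ell\}| > \mu |Q_1|$. Applied (after rescaling by $(1-\ell)^{-1}$) to $h$, Proposition \ref{prop:io} would give $h \ge (1-\ell)\gamma$ on $Q_\tau$, with $\tau$ depending only on $\mu, n$ and $\gamma$ depending only on $\mu, n, p$; this is precisely the conclusion with $\delta = (1-\ell)\gamma$, and matches the parameter dependencies stated in the lemma.

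To invoke Proposition \ref{prop:io}, however, $h$ must be a non-negative supersolution of a linear uniformly parabolic equation in non-divergence form with no first-order terms. Differentiating the regularized equation \eqref{eq:main va} in the $e$-direction yields, for $w := \nabla u \cdot e$,
\[
w_t - a_{ij}(\nabla u)\,w_{ij} = b_k(x,t)\,w_k, \qquad b_k := \partial_{q_k}a_{ij}(\nabla u)\,u_{ij},
\]
and consequently $h = 1 - w$ satisfies the homogeneous linear equation $h_t - a_{ij}(\nabla u)\,h_{ij} - b_k\,h_k = 0$. The matrix $a_{ij}(\nabla u)$ is uniformly parabolic with ellipticity constants depending only on $p$; however, the drift $b_k$ depends on $D^2 u$ and cannot be controlled in $L^\infty$ uniformly in $\eps$. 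Disposing of this drift is the principal obstacle.

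I would overcome this obstacle by a compactness/contradiction argument. Suppose the conclusion fails; then one extracts sequences $\tau_j, \delta_j \to 0$, $\eps_j \ge 0$, $e_j \in \mathbb{S}^{n-1}$, and smooth solutions $u_j$ of \eqref{eq:main va} with $|\nabla u_j|\le 1$ satisfying the measure hypothesis, yet with $\nabla u_j(x_j,t_j)\cdot e_j \ge 1 - \delta_j$ at some $(x_j, t_j) \in Q_{\tau_j}$. The Krylov--Safonov estimate of Theorem \ref{thm:interior holder}, applied to the uniformly parabolic linear equation with measurable coefficients $a_{ij}(\nabla u_j)$, yields equicontinuity of $u_j - u_j(0,0)$; along a subsequence $u_j \to u_\ast$ locally uniformly, $e_j \to e_\ast$, $\eps_j \to \eps_\ast \ge 0$, and by Theorem \ref{thm:stability} the limit $u_\ast$ is a viscosity solution of the corresponding limit PDE with $|\nabla u_\ast| \le 1$ and $|\{\nabla u_\ast \cdot e_\ast \le \ell\}| \ge \mu|Q_1|$.

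The hardest part will be closing the contradiction. From $|\nabla u_j|\le 1$ and $\nabla u_j\cdot e_j \ge 1-\delta_j$, one gets $|\nabla u_j(x_j,t_j)-e_j|^2 \le 2\delta_j$, so that combined with $(x_j,t_j)\to(0,0)$ the gradient limit $\nabla u_\ast(0,0) = e_\ast$ should be forced. Since $|\nabla u_\ast|$ is then close to $1$ on a parabolic neighborhood of $(0,0)$, the limit equation is smooth and uniformly parabolic on that neighborhood, $u_\ast$ is classical there, and the linear parabolic equation (now with smooth, bounded coefficients and smooth drift) satisfied by $\nabla u_\ast \cdot e_\ast$ admits a strong maximum principle: the maximum value $1$ is attained at the interior point $(0,0)$, so $\nabla u_\ast \cdot e_\ast \equiv 1$ in that neighborhood, and propagating by uniform parabolicity through $Q_1$ contradicts the positive-measure set $\{\nabla u_\ast \cdot e_\ast \le \ell\}$. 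The most delicate technical points will be (i) transferring the pointwise information on $\nabla u_j$ to $u_\ast$ despite only $C^0$ compactness of $u_j$, which I expect to carry out by applying the small-perturbation Theorem \ref{l:smallness to regularity} locally near $(x_j,t_j)$ (where $u_j$ is close to the linear function $L_j(x) = u_j(x_j,t_j) + e_j\cdot(x-x_j)$) to obtain uniform $C^{1,\gamma}$ estimates that pass to the limit, and (ii) recovering the explicit, effective dependencies $\tau(\mu,n)$ and $\delta(n,p,\mu,\ell)$ from an abstract compactness argument, which will likely be achieved by running the argument on a dyadic sequence of scales.
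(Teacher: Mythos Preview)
You correctly identify the principal obstacle: the directional derivative $w = \nabla u \cdot e$ satisfies a linear equation with drift $b_k = \partial_{q_k}a_{ij}(\nabla u)\,u_{ij}$ that depends on $D^2u$ and admits no $\eps$-uniform $L^\infty$ bound. Your compactness workaround, however, has a genuine gap. Under only $C^0$ compactness of $u_j$ (which is all Krylov--Safonov provides), neither the measure hypothesis $|\{\nabla u_j \cdot e_j \le \ell\}| > \mu|Q_1|$ nor the pointwise gradient information $\nabla u_j(x_j,t_j)\cdot e_j \ge 1-\delta_j$ passes to the limit. Your proposed remedy---applying Theorem~\ref{l:smallness to regularity} near $(x_j,t_j)$---requires $|u_j - L_j|\le \eta$ on a cylinder of \emph{fixed} radius, but knowing $\nabla u_j$ at a single point together with $|\nabla u_j|\le 1$ does not force $u_j$ close to an affine function on any fixed-size neighbourhood; this closeness is essentially what one obtains \emph{after} the present lemma (via Lemma~\ref{l:gradientarounde2smalloscillation}), so the argument is circular. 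A secondary issue is that a pure contradiction argument cannot yield the explicit parameter dependencies $\tau=\tau(\mu,n)$ and $\delta=\delta(n,p,\mu,\ell)$ asserted in the statement; your suggestion to recover them by ``running the argument on a dyadic sequence of scales'' is not a mechanism that works here.

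The paper's proof is constructive and disposes of the drift directly. The key trick is to work not with $1 - \nabla u\cdot e$ but with $w = (\nabla u\cdot e - \ell + \rho|\nabla u|^2)^+$ for $\rho = \ell/4$. On the support $\{w>0\}$ one has $|\nabla u| > \ell/2$, so there the drift coefficients satisfy $|\partial_{q_m}a_{ij}(\nabla u)| \le C/\ell$. The added term $\rho|\nabla u|^2$ contributes a good-sign term $-2\rho\,a_{ij}u_{ki}u_{kj}$ to the equation for $w$; Cauchy--Schwarz then absorbs the bad product $a_{ij,m}u_{ij}\,w_m$ into this term at the cost of a quadratic gradient term $C(\rho\ell^2)^{-1}|\nabla w|^2$. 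A Hopf--Cole transformation $\overline w = \nu^{-1}(1 - e^{\nu(w-W)})$ removes that quadratic term, producing a non-negative supersolution of the pure second-order equation $\overline w_t \ge a_{ij}\overline w_{ij}$, to which Proposition~\ref{prop:io} applies directly and delivers the stated constants.
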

\begin{proof}
Let $a_{ij}$ be as in \eqref{eq:aij}, and denote
\[
a_{ij,m}=\frac{\partial a_{ij}}{\partial q_m}.
\]
Differentiating \eqref{eq:main va} in $x_k$, we have
\[
(u_k)_t=a_{ij}\big(u_{k})_{ij}+a_{ij,m}u_{ij} (u_{k})_m.
\]
Then 
\[
 (\nabla u\cdot e-\ell)_t=a_{ij}\big(\nabla u\cdot e-\ell)_{ij}+a_{ij,m}u_{ij} (\nabla u\cdot e-\ell)_m.
\]
Therefore,  for
\[
v=|\nabla u|^2,
\]
we have
\[
v_t=a_{ij}v_{ij}+a_{ij,m}u_{ij}v_m-2a_{ij}u_{ki}u_{kj}.
\] 
For $\rho=\ell/4$, let
\[
 w=(\nabla u\cdot e-\ell+\rho |\nabla u|^2)^+.
\]
Then in the region $\Omega_+=\{(x,t)\in Q_1: w>0\}$, we have
\[
 w_t=a_{ij} w_{ij} + a_{ij,m}u_{ij}w_m-2\rho a_{ij}u_{ki}u_{kj}.
\]
Since $|\nabla u|>\ell/2$ in $\Omega_+$, we have
\[
 |a_{ij,m}|\le \frac{4|p-2|}{\ell}\quad\mbox{in }\Omega_+.
\]
By Cauchy-Schwarz inequality, it follows that
\[
  w_t\le a_{ij} w_{ij} + \frac{c_0}{\rho\ell^2}|\nabla w|^2\quad\mbox{in }\Omega_+,
\]
for some constant $c_0>0$ depending only on $p$. Therefore, it satisfies in the viscosity sense that
\[
  w_t\le a_{ij} w_{ij} + \frac{c_0}{\rho\ell^2}|\nabla w|^2\quad\mbox{in }Q_1.
\]
We can choose $c_1$ such that if we let
\[
 W=1-\ell+\rho,\quad\nu=\frac{c_1}{\rho \ell^2},
\]
and
\[
\overline w =\frac{1}{\nu}(1-e^{\nu(w -W )}),
\]
then we have
\[
   \overline w_t\ge a_{ij} \overline w_{ij} \quad\mbox{in }Q_1
\]
in the viscosity sense. Since $W\ge \sup_{Q_1}w$, then $\overline w\ge 0$ in $Q_1$. 

If $\nabla u\cdot e\le \ell$, then $\overline w\ge (1-e^{\nu (\ell-1)})/\nu$. Therefore, it follows from the assumption that
\[
|\{(x,t)\in Q_1: \overline w\ge (1-e^{\nu (\ell-1)})/\nu\}|> \mu |Q_1|.
\]
By Proposition \ref{prop:io}, there exist $\tau>0$ depending only $\mu$ and $n$, and $\gamma>0$ depending only on $\mu, \ell, n$ and $p$ such that
\[
\overline w\ge \gamma\quad\mbox{in }Q_\tau.
\]
Meanwhile, since $w\le W$, we have
\[
\overline w\le W-w.
\]
This implies that
\[
W-w\ge\gamma\quad\mbox{in }Q_{\tau}.
\]
Therefore, we have
\[
 \nabla u\cdot e+\rho|\nabla u|^2 \le 1+\rho -\gamma\quad\mbox{in }Q_{\tau}.
\]
Since $| \nabla u\cdot e|\le |\nabla u|$, we have
\[
 \nabla u\cdot e+\rho ( \nabla u\cdot e)^2\le 1+\rho - \gamma\quad\mbox{in }Q_{\tau}.
 \]
Therefore,
 \[
  \nabla u\cdot e \le \frac{-1+\sqrt{1+4\rho (1+\rho - \gamma)}}{2\rho}\le 1-\delta\quad\mbox{in }Q_{\tau}
 \]
 for some $\delta>0$ depending only on $p, \mu, \ell, n$. 
 \end{proof}

The statement of Lemma \ref{lem:osc2} can be illustrated in Figure \ref{fig:1}.
\begin{figure}[h!]
  \caption{Improvement of oscillation for $\nabla u\cdot e$.}\label{fig:1}
  \centering
   \includegraphics[width=11cm]{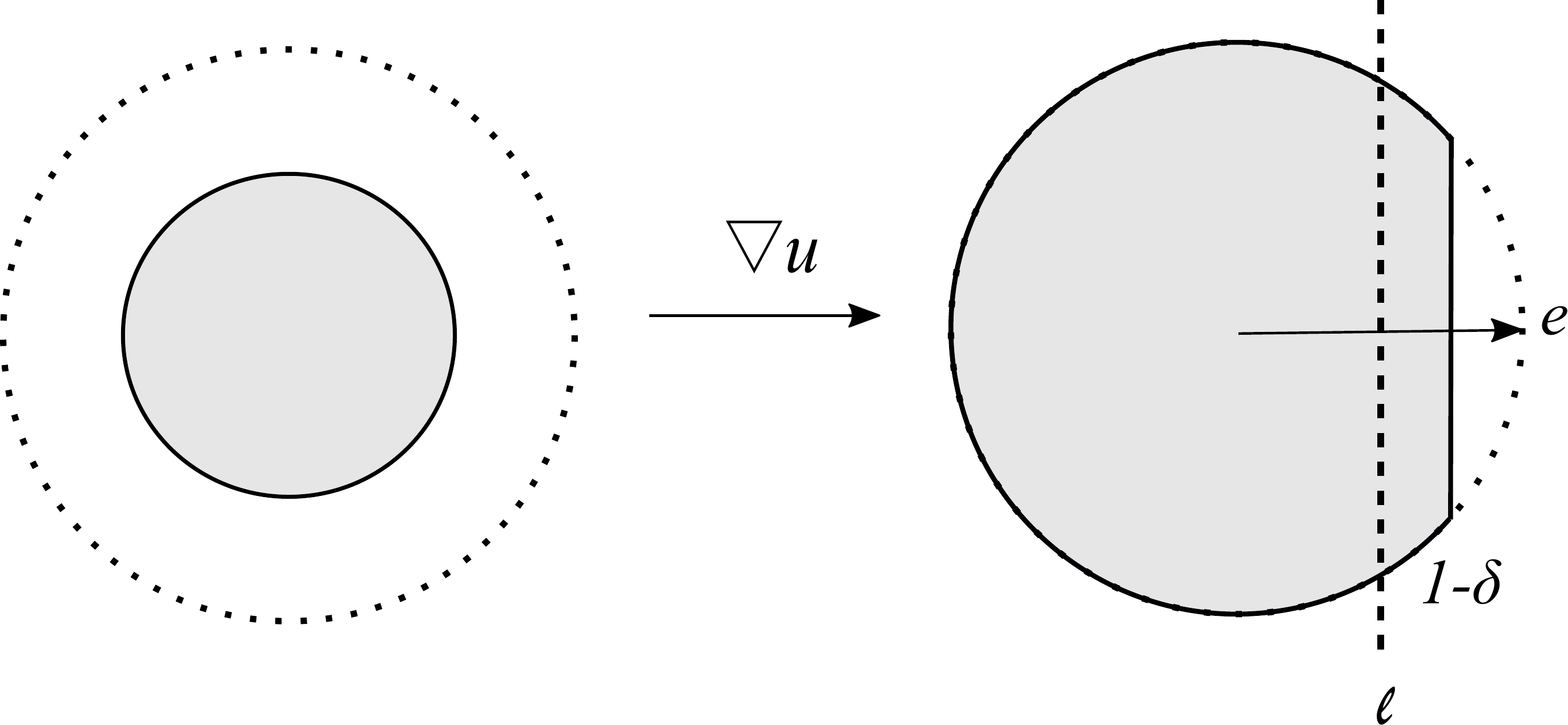}
\end{figure}

If the condition \eqref{eq:proj on e positive} is satisfied in all the directions $e\in\mathbb S^{n-1}$, then we obtain the improvement of oscillation for all $\nabla u\cdot e$, which lead to the improvement of oscillation for $|\nabla u|$. See Figure \ref{fig:2} and Corollary \ref{cor:osc}.
\begin{figure}[h!]
  \caption{Improvement of oscillation for $|\nabla u|$.}\label{fig:2}
  \centering
   \includegraphics[width=11cm]{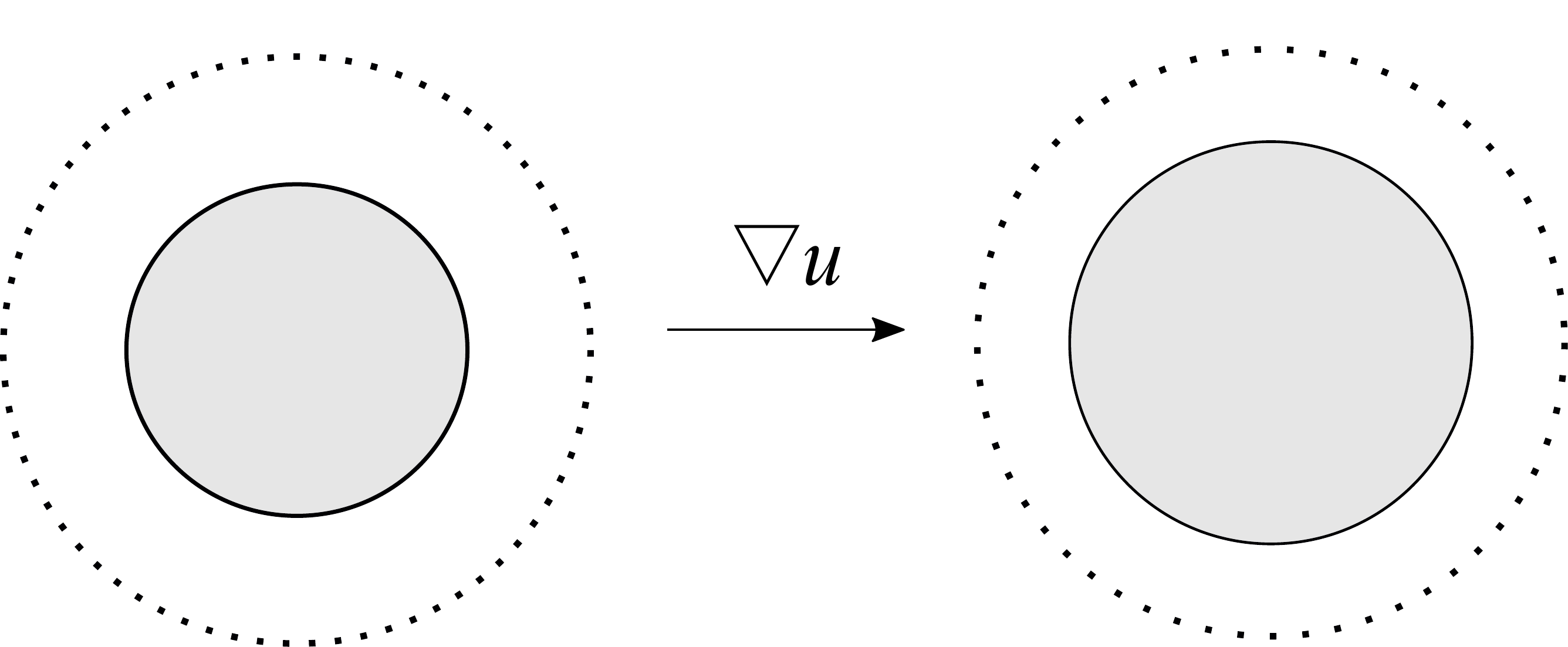}
\end{figure}

\begin{cor}\label{cor:osc}
Let $u$ be a smooth solution of \eqref{eq:main va} such that $|\nabla u|\le 1$ in $Q_1$. For every  $0<\ell<1$, $\mu>0$, there exist $\tau\in (0,1/4)$ depending only on $\mu$ and $n$, and  $\delta>0$ depending only on $n,p,\mu, \ell$, such that for every nonnegative integer $k$, if
\begin{equation}\label{eq:stopping condition}
|\{(x,t)\in Q_{\tau^i}: \nabla u\cdot e\le \ell (1-\delta)^i\}|> \mu |Q_{\tau^i}|\quad\mbox{for all }e\in\mathbb S^{n-1}\mbox{ and }i=0,\cdots, k,
\end{equation}
then
\[
|\nabla u|< (1-\delta)^{i+1}\quad\mbox{in }Q_{\tau^{i+1}} \ \mbox{  for all }i=0,\cdots, k.
\]
\end{cor}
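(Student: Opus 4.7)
The corollary looks like a direct iteration of Lemma \ref{lem:osc2}. The plan is to fix $\tau$ and $\delta$ as the constants produced by Lemma \ref{lem:osc2} applied with the given $\mu$ and $\ell$, then shrink $\tau$ further to $\min(\tau,1/4)$ if necessary (the conclusion of that lemma remains valid on any sub-cylinder, and the dependencies of $\tau,\delta$ are unaffected), and prove by induction on $j\in\{0,1,\ldots,k+1\}$ that $|\nabla u|\le(1-\delta)^{j}$ in $Q_{\tau^{j}}$, with strict inequality for $j\ge 1$. The base case $j=0$ is the hypothesis $|\nabla u|\le 1$ in $Q_1$.

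For the inductive step from $j$ to $j+1$, the natural device is to parabolically rescale $Q_{\tau^j}$ to the unit cylinder by setting
\[
\tilde u(x,t)=\frac{1}{\tau^{j}(1-\delta)^{j}}\,u(\tau^{j}x,\tau^{2j}t),\qquad (x,t)\in Q_{1}.
\]
A short calculation gives $\nabla\tilde u(x,t)=(1-\delta)^{-j}\nabla u(\tau^{j}x,\tau^{2j}t)$, so the inductive hypothesis yields $|\nabla\tilde u|\le 1$ in $Q_{1}$. One also checks, using the explicit form \eqref{eq:aij} of $a_{ij}$, that $\tilde u$ solves \eqref{eq:main va} with $\va$ replaced by $\tilde\va:=\va/(1-\delta)^{j}$; since Lemma \ref{lem:osc2} holds for every $\va\ge 0$ with constants $\tau,\delta$ independent of $\va$, this substitution is harmless. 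Moreover, the set $\{(x,t)\in Q_{\tau^{j}}:\nabla u\cdot e\le\ell(1-\delta)^{j}\}$ corresponds under the rescaling to $\{(x,t)\in Q_{1}:\nabla\tilde u\cdot e\le\ell\}$, and both Lebesgue measures pick up the same Jacobian $\tau^{(n+2)j}$, so the density fraction $\mu$ is preserved. Hence the hypothesis \eqref{eq:stopping condition} at level $j$ translates exactly to the hypothesis of Lemma \ref{lem:osc2} for $\tilde u$ in every direction $e\in\mathbb S^{n-1}$, and the lemma delivers $\nabla\tilde u\cdot e<1-\delta$ in $Q_{\tau}$. Choosing $e=\nabla\tilde u/|\nabla\tilde u|$ pointwise (and trivially otherwise) yields $|\nabla\tilde u|<1-\delta$ in $Q_{\tau}$, which after unrescaling becomes $|\nabla u|<(1-\delta)^{j+1}$ in $Q_{\tau^{j+1}}$, closing the induction.

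The only non-routine point is the scaling of the equation, namely that the rescaled $\tilde u$ still satisfies a regularized problem of the same form \eqref{eq:main va} (merely with a different $\tilde\va$). This relies on the fact that $a_{ij}(q)$ in \eqref{eq:aij} is a degree-zero rational function in $q$, with $\va^2$ scaling in the same way as $|q|^2$. Once this is verified, one can reuse the same $\tau$ and $\delta$ from Lemma \ref{lem:osc2} at every iteration, and everything else in the argument is bookkeeping: the parameter dependencies $\tau=\tau(\mu,n)$ and $\delta=\delta(n,p,\mu,\ell)$ inherited from Lemma \ref{lem:osc2} match those claimed in the corollary.
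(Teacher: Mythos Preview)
Your proposal is correct and follows essentially the same route as the paper: induction on the scale, with the parabolic rescaling $\tilde u(x,t)=\tau^{-j}(1-\delta)^{-j}u(\tau^{j}x,\tau^{2j}t)$ reducing each step to a fresh application of Lemma~\ref{lem:osc2}, and the observation that the rescaled function solves \eqref{eq:main va} with $\va$ replaced by $\va/(1-\delta)^{j}$ so the same $\tau,\delta$ apply. The only cosmetic difference is that the paper phrases the induction as ``assume the result for $i=0,\dots,k-1$ and prove it for $i=k$'' rather than your forward $j\to j+1$ formulation, and it leaves implicit the pointwise choice $e=\nabla\tilde u/|\nabla\tilde u|$ that you spell out.
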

\begin{proof}
When $i=0$, it follows from Lemma \ref{lem:osc2} that $\nabla u\cdot e< 1-\delta$ in $Q_{\tau}$ for all $e\in\mathbb S^{n-1}$. This implies that $|\nabla u|<1-\delta$ in $Q_\tau$.

Suppose this corollary holds for $i=0,\cdots, k-1$. We are going prove it for $i=k$. Let \[v(x,t)=\frac{1}{\tau^k (1-\delta)^k}u(\tau^k x, \tau^{2k} t).\] Then $v$ satisfies
\[
v_t=\Delta v+(p-2)\frac{v_iv_j}{|\nabla v|^2+\eps (1-\delta)^{-2k}} v_{ij}\quad\mbox{in } Q_1.
\]
By the induction hypothesis, we also know that $|\nabla v|\le 1$ in $Q_1$, and
\[
|\{(x,t)\in Q_{1}: \nabla v\cdot e\le \ell \}|> \mu |Q_{1}| \quad\mbox{for all }e\in\mathbb S^{n-1}.
\]
Therefore, by Lemma \ref{lem:osc2} we have
\[
\nabla v\cdot e \le 1-\delta\quad\mbox{in }Q_{\tau} \quad\mbox{for all }e\in\mathbb S^{n-1}.
\]
Hence, $|\nabla v|\le 1-\delta$ in $Q_\tau$. Consequently,
\[
|\nabla u|< (1-\delta)^{k+1}\quad\mbox{in }Q_{\tau^{k+1}}. 
\]
\end{proof}

\subsection{Using the small oscillation}

Unless $|\nabla u(0,0)|=0$, the condition in \eqref{eq:stopping condition} will fail to be satisfied after finitely many steps of scaling in some direction $e\in\mathbb S^{n-1}$, in which we will then show that $u$ is close to some linear function so that Theorem \ref{l:smallness to regularity} can be applied. See Lemma \ref{l:gradientarounde2smalloscillation} and Figure \ref{fig:3}.

Before that, we need a lemma which states that for a solution of a uniformly parabolic linear equation, if its oscillation in space is uniformly small in every time slice, then its oscillation in the space-time is also small.

\begin{lem} \label{l:timeslices2allcylinder}
 Let $u\in C(\overline Q_1)$ be a solution of \eqref{eq:linear parabolic} satisfying \eqref{eq:ellipticity} and $A$ is a positive constant. 
Assume that for all $t \in [-1,0]$, we have
\[ \osc_{B_1} u(\cdot, t) \leq A,\]
then
\[ \osc_{Q_1} u \leq C A,\]
where $C$ is a positive constant  depending only on $\Lambda$ and the dimension $n$.
\end{lem}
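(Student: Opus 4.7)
The plan is to reduce the lemma to proving $M := \sup_{Q_1} u - \inf_{Q_1} u \le CA$. After subtracting a constant I assume $\inf_{Q_1} u = 0$ and $\sup_{Q_1} u = M$. Since $u \in C(\overline{Q_1})$ I pick $(x^-, t^-), (x^+, t^+) \in \overline{Q_1}$ realising these extremes, and the hypothesis evaluated at $t = t^\pm$ turns the single-point values into uniform slice bounds
\[
 u(\cdot, t^-) \le A \ \text{on } \overline{B_1}, \qquad u(\cdot, t^+) \ge M - A \ \text{on } \overline{B_1}.
\]

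I would then split on the size of the time gap $|t^+ - t^-|$, with a threshold $c_0 > 0$ fixed at the end. In the large-gap case $|t^+ - t^-| \ge c_0$, set $\tilde u := M - u$ if $t^- < t^+$ and $\tilde u := u$ otherwise; in either scenario $\tilde u \ge 0$ is a solution of \eqref{eq:linear parabolic} that is uniformly $\ge M - A$ on an earlier full slice of $B_1$ and uniformly $\le A$ on a later full slice of $B_1$, with time gap $\ge c_0$. A combination of the weak Harnack inequality (Theorem \ref{thm:weak harnack}) and the local maximum principle (Theorem \ref{thm:local max principle}), applied on a sub-cylinder of size comparable to $\sqrt{c_0}$ bridging the two slices, propagates the uniform lower bound forward in time to yield $\tilde u \ge c_1(M - A)$ on a later interior slice. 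Intersecting with the pointwise upper bound there gives $c_1 (M - A) \le A$, hence $M \le (1 + 1/c_1) A$. In the small-gap case $|t^+ - t^-| < c_0$, take $x_0 = 0$; the slice bounds force $u(0, t^+) - u(0, t^-) \ge M - 2A$, while the interior H\"older estimate (Theorem \ref{thm:interior holder}), applied on a parabolic sub-cylinder $B_r \times (s_0 - r^2, s_0] \subset Q_1$ of radius $r$ comparable to $\sqrt{c_0}$ whose interior half $B_{r/2} \times (s_0 - r^2/4, s_0]$ contains both $(0, t^\pm)$, delivers $|u(0, t^+) - u(0, t^-)| \le C M c_0^{\alpha/2}$. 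Choosing $c_0$ so that $C c_0^{\alpha/2} \le 1/2$ yields $M \le 4A$, and this choice also fixes the constant in the large-gap case.

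The main obstacle I anticipate is the forward-propagation step in the large-gap case: the Krylov--Safonov results in Section \ref{sec: preliminaries} are stated on canonical sub-cylinders and compare an $L^\theta$ norm with an infimum over a whole cylinder, whereas the input I have is a uniform-in-space bound only on one time slice. Turning this slice datum into a quantity that the weak Harnack can exploit requires either an explicit barrier $\psi(x,t) = c(R^2 - |x|^2)^+ e^{-\alpha(t - t^-)}$ satisfying $\psi \le \tilde u$ on the parabolic boundary of a suitable interior sub-cylinder, or first using the continuity of $\tilde u$ together with Proposition \ref{prop:io} to inflate the slice to a set of positive measure where $\tilde u \ge (M-A)/2$. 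A secondary difficulty is the small-gap subcase in which both $t^\pm$ lie within distance $c_0$ of the initial time $-1$, so that no interior sub-cylinder containing both exists; this subcase I would handle directly using continuity of $u$ at $t = -1$ together with the oscillation hypothesis evaluated at $t = -1$, which already forces $M \le O(A)$ without using the H\"older estimate.
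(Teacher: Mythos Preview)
Your approach is quite different from the paper's and considerably more elaborate. The paper constructs explicit global barriers: with $\overline w(x,t) = \bar a + 5n\Lambda A\, t + 2A|x|^2$ (and the symmetric lower barrier $\underline w$), one checks that $\overline w$ is a strict supersolution and that, thanks to the slice hypothesis $\osc_{B_1} u(\cdot,t) \leq A$, any contact point between $\overline w$ and $u$ must lie in the spatial interior $B_1$; the maximum principle then forces $\overline w \geq u$ throughout $Q_1$. Comparing $\overline w$ and $\underline w$ at $t=-1$ yields $\bar a - \underline a \leq (10n\Lambda+1)A$, and the bound $\osc_{Q_1} u \leq (10n\Lambda+5)A$ follows. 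No Krylov--Safonov machinery, no case split, and the constant visibly depends only on $n$ and $\Lambda$, as the lemma asserts.

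Your route, as written, has two concrete gaps. First, the small-gap subcase with both $t^{\pm}$ within $c_0$ of $t=-1$ is not handled: invoking ``continuity of $u$ at $t=-1$'' is not quantitative, since the modulus of continuity of $u$ up to the initial slice is not controlled by $A,n,\lambda,\Lambda$ alone (nothing like Proposition~\ref{prop:boundary regularity} is available here without information on the boundary data), and the interior H\"older estimate of Theorem~\ref{thm:interior holder} cannot reach those times. Second, every tool you invoke from Krylov--Safonov theory --- the weak Harnack inequality, the interior H\"older estimate, Proposition~\ref{prop:io} --- carries the lower ellipticity constant $\lambda$, so even a completed version of your argument would produce a constant depending on $\lambda$ as well, contradicting the stated dependence on $\Lambda$ and $n$ only. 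The barrier idea you float in passing is in fact the right thread to pull: a paraboloid-in-$x$, linear-in-$t$ comparison function handles all cases at once and needs only the upper bound $\mathrm{tr}(a_{ij}) \leq n\Lambda$; that is exactly what the paper does.
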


\begin{proof}
Let $\overline w(x,t) = \overline a+ 5n\Lambda A t + 2 A |x|^2$, where $a$ is chosen so that $\overline w(\cdot,-1) \geq u(\cdot,-1)$ and $\overline w(\bar x, -1) = u(\bar x,-1)$ for some $\bar x\in\overline B_1$. If $\bar x\in\partial B_1$, then
\[
2A= \overline w(\bar x,-1)-\overline w(0,-1)\le u(\bar x,-1)-u(0,-1)\le \osc_{B_1} u(\cdot-1) \leq A,
\]
which is impossible. Therefore, $\bar x\in B_1$.

We claim that \[\overline w \geq u \quad\text{in } Q_1.\] 
If not, let $m=-\inf_{Q_1} (\overline w-u)>0$ and $(x_0,t_0)\in\overline Q_1$ be such that $m=u(x_0,t_0)-\overline w(x_0,t_0)$. By the choice of $\bar a$, we know that $t_0>-1$. Since $\overline w+m\ge u$ in $Q_1$, $\overline w(x_0,t_0)+m=u(x_0,t_0)$ and $\osc_{B_1} u(\cdot, t_0)\le A$, by the same reason in the above, we have $x_0\in B_1$. Therefore, we have that
\[
(\overline w+m)_t - a_{ij}(x,t) \partial_{ij} (\overline w+m) \le 0\quad\text{at }(x_0,t_0).
\]
This leads to 
\[
5n\Lambda A\le 4A\cdot  Tr(a_{ij})\le 4n\Lambda A,
\]
which is impossible. This proves the claim.

Similarly, one can show that for $\underline w(x,t) = \underline a - 5n\Lambda A t - 2 A |x|^2$, we have
\[
\underline w \leq u \quad\text{in } Q_1,
\]
where $\underline a$ is chosen so that $\underline w(\cdot,-1) \leq u(\cdot,-1)$ and $\underline w(\underline x,-1) = u(\underline x,-1)$ for some $\underline x\in B_1$. 

Meanwhile, since
\[
\overline w(\bar x,-1)-\underline w(\underline x,-1)=u(\bar x,-1)-u (\underline x,-1)\le osc_{B_1}u(\cdot,-1)\le A, 
\]
we have
\[
\bar a-\underline a\le (10n\Lambda+1)A.
\]
Therefore, we have
\[
\osc_{Q_1} u\le \sup_{Q_1}\overline w-\inf_{Q_1}\underline w\le \bar a-\underline a+4A=(10 n\Lambda +5)A.
\]
\end{proof}

\begin{figure}[h!]
  \caption{When $|\nabla u(0,0)|\neq 0$.}\label{fig:3}
  \centering
   \includegraphics[width=11cm]{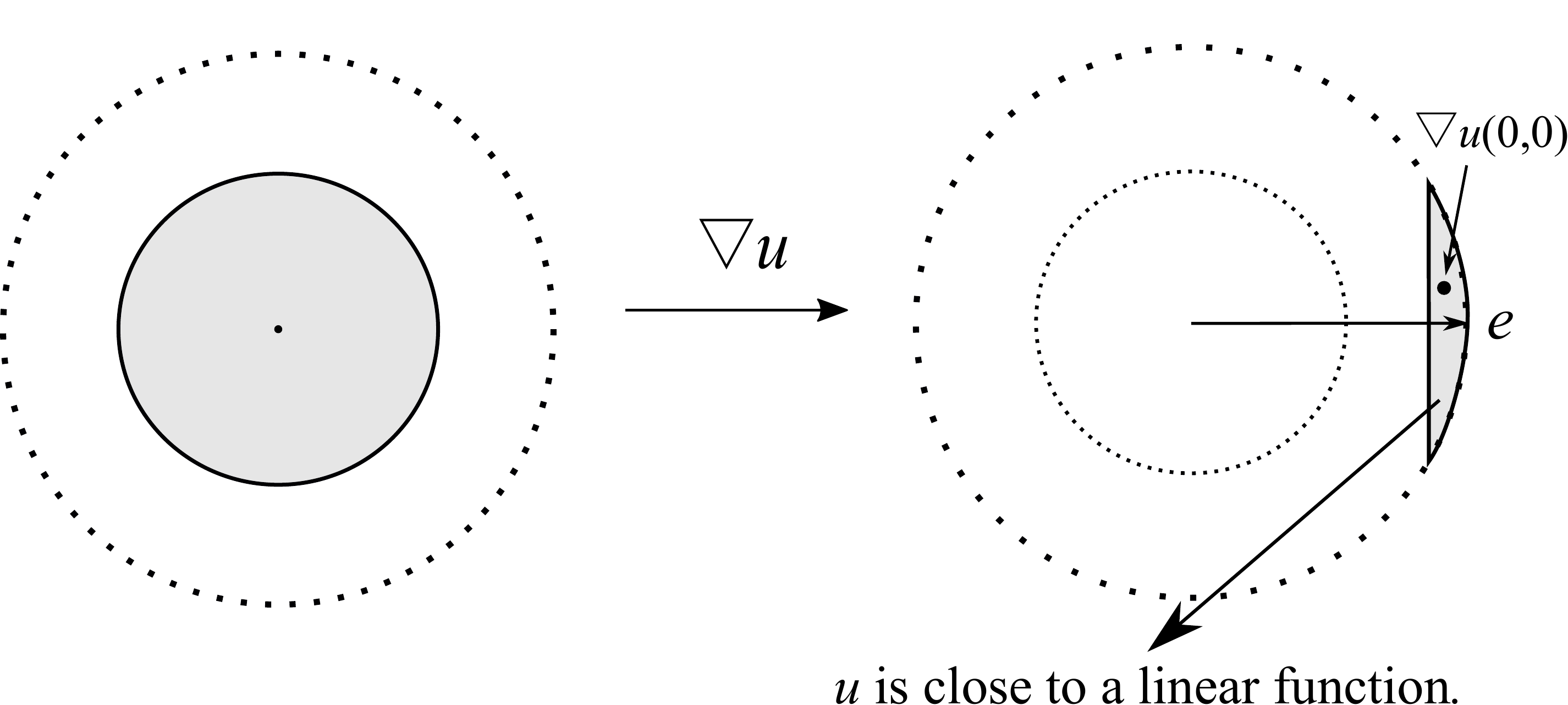}
\end{figure}

\begin{lem} \label{l:gradientarounde2smalloscillation}
Let $\eta$ be a positive constant and $u$ be a smooth solution of \eqref{eq:linear parabolic} satisfying \eqref{eq:ellipticity}. Assume $|\nabla u| \leq 1$ everywhere and
\[ \left\vert \left\{ (x,t) \in Q_1 : |\nabla u - e| > \eps_0 \right\} \right\vert \leq \eps_1\]
for some $e\in\mathbb S^{n-1}$ and two positive constants $\eps_0,\eps_1.$ Then, if $\eps_0$ and $\eps_1$ are sufficiently small, there exists a constant $a \in \R$, such that
\[ |u(x,t) - a - e \cdot x| \leq \eta \quad \text{ for all } (x,t) \in Q_{1/2}.\]
Here, both $\va_0$ and $\va_2$ depend only on $n,\lambda,\Lambda$ and $\eta$.
\end{lem}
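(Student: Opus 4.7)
The plan is to work with $w(x,t) := u(x,t) - e \cdot x$, which is still a solution of the linear parabolic equation \eqref{eq:linear parabolic} with the same coefficients (since $e \cdot x$ is stationary and has vanishing Hessian), and to show that $\osc_{Q_{1/2}} w$ can be made as small as one wishes by taking $\eps_0, \eps_1$ small. The desired conclusion then follows by choosing $a := w(0,0)$, for then $|u(x,t) - a - e \cdot x| = |w(x,t) - w(0,0)| \leq \osc_{Q_{1/2}} w$. Note that $w$ is Lipschitz in space with constant $2$, and after normalizing $u$ by a constant one has $\|w\|_{L^\infty(Q_1)}$ controlled by a dimensional constant. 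The input we exploit is that $|\nabla w(y,t)| \leq \eps_0$ outside the set $G := \{(y,t) \in Q_1 : |\nabla u - e| > \eps_0\}$, of measure at most $\eps_1$.

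The first step is a Riesz-potential estimate on each time slice. Starting from $w(z,t) - w(x,t) = \int_0^{|z-x|} \nabla w(x + r\hat v, t) \cdot \hat v\, dr$ with $\hat v = (z-x)/|z-x|$ and integrating over $z \in B_{3/4}$ in polar coordinates centered at $x \in B_{1/2}$, one obtains
\[
\dashint_{B_{3/4}} |w(z,t) - w(x,t)|\, dz \leq C(n) \int_{B_1} \frac{|\nabla w(y,t)|}{|y-x|^{n-1}}\, dy.
\]
Splitting $|\nabla w(\cdot,t)| \leq \eps_0 + 2\,\mathbf{1}_{G(t)}$ with $G(t) := \{y \in B_1 : |\nabla w(y,t)| > \eps_0\}$, and using the classical rearrangement estimate $\int_E |y-x|^{-(n-1)}\, dy \leq C|E|^{1/n}$, one arrives at the slice bound $\osc_{B_{1/2}} w(\cdot, t) \leq C(\eps_0 + |G(t)|^{1/n})$ for every $t$. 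Integrating in $t$, using Fubini ($\int_{-1}^0 |G(t)|\, dt \leq \eps_1$), and H\"older's inequality yields the integrated estimate
\[
\int_{-1}^0 \osc_{B_{1/2}} w(\cdot, t)\, dt \leq C(\eps_0 + \eps_1^{1/n}) =: \kappa.
\]

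This shows many time slices have small oscillation, but Lemma \ref{l:timeslices2allcylinder} requires control at \emph{every} time slice. The interpolation step is where I would invoke Theorem \ref{thm:interior holder} for $w$: there is $\alpha = \alpha(n,\lambda,\Lambda) \in (0,1)$ with $|w(x,t)-w(x,s)| \leq C|t-s|^{\alpha/2}$ uniformly on $Q_{3/4}$. For any $t$ and $\delta>0$, pigeonhole on $[t-\delta,t+\delta]$ produces a good time $t_*$ with $\osc_{B_{1/2}} w(\cdot,t_*) \leq \kappa/(2\delta)$, and the H\"older-in-time estimate then bounds $\osc_{B_{1/2}} w(\cdot,t) \leq \kappa/(2\delta) + C\delta^{\alpha/2}$; optimizing over $\delta$ gives a uniform slice bound $\osc_{B_{1/2}} w(\cdot,t) \leq C\kappa^{\alpha/(2+\alpha)}$ valid for every $t \in [-1/4, 0]$. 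Finally, a rescaled application of Lemma \ref{l:timeslices2allcylinder} on $B_{1/2}\times[-1/4,0]$ converts this into $\osc_{Q_{1/2}} w \leq C\kappa^{\alpha/(2+\alpha)}$, which is smaller than $\eta$ once $\eps_0, \eps_1$ are small enough, with the threshold depending only on $n$, $\lambda$, $\Lambda$, and $\eta$.

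The main obstacle is precisely this interpolation: the Riesz-potential argument is purely a spatial computation and only controls the time-averaged slice oscillation, whereas Lemma \ref{l:timeslices2allcylinder} requires a pointwise-in-time bound. Upgrading the $L^1_t$ estimate to an $L^\infty_t$ estimate (at the cost of a worse, but still small, power of $\eps_0 + \eps_1^{1/n}$) relies crucially on the parabolic H\"older regularity of $w$ in $t$, and this is exactly why Lemma \ref{l:timeslices2allcylinder} is extracted as an auxiliary statement of independent interest.
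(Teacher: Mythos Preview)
Your proposal is correct and follows essentially the same approach as the paper: both arguments (i) convert the smallness of $|\nabla u - e|$ in measure into smallness of the spatial oscillation of $w = u - e\cdot x$ on most time slices (you via the Riesz potential representation, the paper via Morrey's inequality --- these are the same computation), (ii) use the interior H\"older estimate of Theorem~\ref{thm:interior holder} in the time variable to fill in the exceptional slices, and (iii) invoke Lemma~\ref{l:timeslices2allcylinder} to pass from a uniform slice bound to a full space-time oscillation bound. The only cosmetic difference is that the paper first applies Chebyshev to isolate a small set $E$ of bad times and then uses H\"older continuity to interpolate across $E$, while you integrate the slice oscillation in $t$ and pigeonhole; both routes yield the same power-type dependence on $\eps_0,\eps_1$. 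One minor point: your claim that ``after normalizing $u$ by a constant one has $\|w\|_{L^\infty(Q_1)}$ controlled'' already requires Lemma~\ref{l:timeslices2allcylinder} (Lipschitz in $x$ alone only bounds each slice), exactly as the paper does before invoking Theorem~\ref{thm:interior holder}.
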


\begin{proof}
Let $f(t):=| \{ x \in B_1 : |\nabla u(x,t) - e| > \eps_0 \} |$. By the assumptions and Fubini's theorem, we have that $\int_{-1}^0 f(t) \ud t\leq \eps_1$. It follows that for $E:=\{t\in (-1,0): f(t)\ge\sqrt{\eps_1}\}$, we obtain
\[
|E|\le \frac{1}{\sqrt{\eps_1}}\int _{E} f(t)\ud t\le \frac{1}{\sqrt{\eps_1}}\int _{-1}^0 f(t)\ud t\le \sqrt{\eps_1}.
\]
Therefore, for all $t \in (-1,0] \setminus E$, with $|E| \leq  \sqrt{\eps_1}$, we have
\begin{equation}\label{eq:aux-1}
 | \{ x \in B_1 : |\nabla u(x,t) - e| > \eps_0 \} | \leq \sqrt{\eps_1}.
 \end{equation}
It follows from \eqref{eq:aux-1} and Morrey's inequality (see, e.g., Section 5.6.2 in the book \cite{Evans}) that for all $t\in  (-1,0] \setminus E$, we have
\begin{equation}\label{eq:aux-2}
\osc_{B_{1/2}} (u(\cdot,t)-e\cdot x)\le C(n)\|\nabla u-e\|_{L^{2n}(B_1)}\le C(n)(\eps_0+\eps_1^{\frac{1}{4n}}),
\end{equation}
where $C(n)>0$ depends only on $n$. 

Meanwhile, since $|\nabla u| \leq 1$ in $Q_1$, we have that $\osc_{B_1} u(\cdot,t) \leq 2$ for all $t \in (-1,0]$. Thus, applying Lemma \ref{l:timeslices2allcylinder}, we have that $\osc_{Q_1} u \leq C$ for some constant $C$. The function $u$ is a solution of a uniformly parabolic equation. By Theorem \ref{thm:interior holder}, we have
\[ \|u\|_{C^\alpha(Q_{1/2})} \leq C\]
for some positive constants $\alpha$ and $C$ depending only on $\lambda,\Lambda, n$. Therefore, by \eqref{eq:aux-2} and the fact that $|E|\le\sqrt{\eps_1}$, we obtain 
\[
\osc_{B_{1/2}} (u(\cdot,t)-e\cdot x)\le C(\eps_0+\eps_1^{\frac{1}{4n}}+\eps_1^{\frac{\alpha}{4}})
\]
for all $t \in (-1/4,0]$ (that is, including $t \in E$). By Lemma \ref{l:timeslices2allcylinder}, we obtain
\[
\osc_{Q_{1/2}}  (u-e\cdot x)\le C(\eps_0+\eps_1^{\frac{1}{4n}}+\eps_1^{\frac{\alpha}{4}}),
\]
where $C>0$ depends only on $\lambda,\Lambda, n$. Hence, if $\eps_0$ and $\eps_1$ are sufficiently small, there exists a constant $a \in \R$, such that
\[ |u(t,x) - a - e \cdot x| \leq \eta \quad\text{ for all } (x,t) \in Q_{1/2}.\]
\end{proof}

\subsection{Iteration}
In this section, we finish our proof of the following a priori estimates.
\begin{thm}\label{thm:holdergradient}
Let $u$ be a smooth solution of \eqref{eq:main va} in $Q_1$. Then there exist two positive constants $\alpha$ and $C$ depending only on $n$ and $p$ such that 
\[
 \|\nabla u\|_{C^\alpha(Q_{1/2})}\le C(\|u\|_{L^\infty(Q_1)}+\va).
\]
Also, there holds
\[
\sup_{(x,t), (x,s)\in Q_{1/2}}\frac{|u(x,t)-u(x,s)|}{|t-s|^{\frac{1+\alpha}{2}}}\le C(\|u\|_{L^\infty(Q_1)}+\va).
\]
\end{thm}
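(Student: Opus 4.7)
The plan is to execute the dichotomy outlined at the start of Section 4. Using Theorem \ref{thm:lip} and one parabolic rescaling $u(x,t)\mapsto u(\lambda x,\lambda^2 t)/\lambda$ (which preserves \eqref{eq:main va}, only rescaling $\va$), we may assume $|\nabla u|\le 1$ and $\va\le 1$ in $Q_1$. Translating an arbitrary base point in $Q_{1/2}$ to the origin, it suffices to establish the pointwise expansion
\[
|u(y,s)-u(0,0)-\nabla u(0,0)\cdot y|\le C\bigl(|y|+|s|^{1/2}\bigr)^{1+\alpha}\qquad\text{for all }(y,s)\in Q_{1/2}
\]
with $\alpha,C$ depending only on $n$ and $p$; the two conclusions of the theorem then follow routinely, the first by differencing this expansion at two nearby base points, the second by setting $y=0$. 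To produce the expansion, fix $\mu>0$ small and $\ell<1$ close to $1$, let $\tau\in(0,1/4)$ and $\delta>0$ be the constants furnished by Corollary \ref{cor:osc}, let $k_0\in\{0,1,2,\ldots\}\cup\{\infty\}$ be the largest index for which \eqref{eq:stopping condition} holds for every $i<k_0$ (so that $|\nabla u|\le(1-\delta)^k$ on $Q_{\tau^k}$ for each $k\le k_0$), and choose $\alpha\in(0,1)$ with $\alpha\le\log(1-\delta)/\log\tau$.

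\emph{Case A ($k_0=+\infty$).} Then $\nabla u(0,0)=0$ and $\sup_{Q_r}|\nabla u|\le Cr^\alpha$ for every $r\in(0,1]$. Integrating across $B_r$ at each fixed time yields $\osc_{B_r}u(\cdot,t)\le 2Cr^{1+\alpha}$ for all $t\in[-r^2,0]$. Since $u$ solves the linear uniformly parabolic equation $u_t=a_{ij}(\nabla u)u_{ij}$, Lemma \ref{l:timeslices2allcylinder} upgrades this slicewise bound to $\osc_{Q_r}u\le Cr^{1+\alpha}$, which is the desired expansion at $(0,0)$ (with the linear part vanishing).

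\emph{Case B ($k_0<+\infty$).} Minimality of $k_0$ provides an $e\in\mathbb{S}^{n-1}$ for which the set $\{\nabla u\cdot e>\ell(1-\delta)^{k_0}\}$ fills a fraction at least $1-\mu$ of $Q_{\tau^{k_0}}$. The rescaling $v(x,t):=u(\tau^{k_0}x,\tau^{2k_0}t)/\bigl((1-\delta)^{k_0}\tau^{k_0}\bigr)$ produces a smooth solution of \eqref{eq:main va} in $Q_1$ (with a new parameter $\va/(1-\delta)^{k_0}$), with $|\nabla v|\le 1$ on $Q_1$ and $|\nabla v-e|^2=|\nabla v|^2+1-2\nabla v\cdot e\le 2(1-\ell)$ on a subset of $Q_1$ of measure at least $(1-\mu)|Q_1|$. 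Fixing any $\gamma\in[\alpha,1)$, letting $\eta$ be the corresponding threshold from Theorem \ref{l:smallness to regularity}, and choosing $\ell$ close to $1$ and $\mu$ small enough that Lemma \ref{l:gradientarounde2smalloscillation} applies, we obtain $|v-a-e\cdot x|\le\eta$ on $Q_{1/2}$ for some $a\in\mathbb{R}$. A harmless cosmetic rescaling onto the unit cylinder followed by Theorem \ref{l:smallness to regularity} supplies a universal $C^{2,\gamma}$ bound for $v-a-e\cdot x$, and a parabolic Taylor expansion of $v$ at $(0,0)$ controls the first-order remainder by $C\rho'^{1+\gamma}$ with $\rho'=|y'|+|s'|^{1/2}$. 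Scaling back to $u$ introduces a factor $(1-\delta)^{k_0}\tau^{-k_0\alpha}$, which stays bounded exactly because $\alpha\le\log(1-\delta)/\log\tau$. For $(y,s)\in Q_{1/2}\setminus Q_{\tau^{k_0}}$, one instead locates a shell $Q_{\tau^k}\setminus Q_{\tau^{k+1}}$ containing $(y,s)$ with $k<k_0$ and uses $|\nabla u|\le(1-\delta)^k$ on $Q_{\tau^k}$ together with Lemma \ref{l:timeslices2allcylinder}, exactly as in Case A, matching the same exponent.

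The only substantive obstacle is the bookkeeping in Case B: the constant in the pointwise expansion must be independent of $k_0$, which pins $\alpha$ to $\log(1-\delta)/\log\tau$ and forces all the parameters $\mu,\ell,\eta,\gamma$ to be chosen in terms of $n,p$ before $\alpha$ is fixed. The key conceptual input beyond Sections 4.1--4.2 is the use of Lemma \ref{l:timeslices2allcylinder}, which converts the spatial gradient decay in Case A into the $\tfrac{1+\alpha}{2}$-Hölder time regularity of $u$ claimed in the second estimate of the theorem.
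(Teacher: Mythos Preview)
Your proposal is correct and follows essentially the same approach as the paper: the same dichotomy driven by Corollary~\ref{cor:osc}, the same use of Lemma~\ref{l:gradientarounde2smalloscillation} and Theorem~\ref{l:smallness to regularity} in Case~B, and the same reliance on Lemma~\ref{l:timeslices2allcylinder} to pass from slicewise spatial oscillation to space--time oscillation. The only organizational difference is that the paper first proves the pointwise bound $|\nabla u(x,t)-q|\le C(|x|+\sqrt{|t|})^\alpha$ and afterwards derives the $\tfrac{1+\alpha}{2}$-H\"older time regularity via Lemma~\ref{l:timeslices2allcylinder}, whereas you aim directly for the $C^{1,\alpha}$ expansion of $u$ and read off both conclusions at once; one small caution is that your phrase ``fixing any $\gamma\in[\alpha,1)$'' is circular since $\gamma$ feeds into $\eta,\eps_0,\eps_1,\ell,\mu,\tau,\delta$ and hence into $\alpha$---simply fix $\gamma$ first (the paper takes $\gamma=1/2$) and then set $\alpha\le\min\bigl(\gamma,\log(1-\delta)/\log\tau\bigr)$.
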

\begin{proof}
We first show the H\"older estimate of $\nabla u$ at $(0,0)$. Moreover, by normalization, we may assume that $u(0,0)=0$ and $|\nabla u|\le 1$ in $Q_1$.

Let $\eta$ be the one in Theorem \ref{l:smallness to regularity} with $\gamma=1/2$, and for this $\eta$, let $\va_0,\va_1$ be two sufficiently small positive constants so that the conclusion of Lemma \ref{l:gradientarounde2smalloscillation} holds. For $\ell=1-\eps_0^2/2$ and $\mu=\va_1/|Q_1|$, if 
\[
|\{(x,t)\in Q_1: \nabla u\cdot e \le \ell\}|\le \mu |Q_1|\quad\mbox{for any }e\in\mathbb S^{n-1},
\]
then
\[ \left\vert \left\{ (x,t) \in Q_1 : |\nabla u - e| > \eps_0 \right\} \right\vert \leq \eps_1.\]
This is because if $|\nabla u(x,t)-e|>\eps_0$ for some $(x,t)\in Q_1$, then
\[
|\nabla u|^2-2\nabla u\cdot e+1\ge\eps_0^2.
\]
Since $|\nabla u|\le 1$, we have 
\[
\nabla u\cdot e\le 1-\eps_0^2/2.
\]
Therefore, if $\ell=1-\eps_0^2/2$ and $\mu=\va_1/|Q_1|$, then
\[
\left\{ (x,t) \in Q_1 : |\nabla u - e| > \eps_0 \right\}\subset  \{(x,t)\in Q_1: \nabla u\cdot e \le \ell\},
\]
from which it follows that
\[
\left\vert\left\{ (x,t) \in Q_1 : |\nabla u - e| > \eps_0 \right\}\right\vert \le  \left\vert\{(x,t)\in Q_1: \nabla u\cdot e\le \ell\}\right\vert\le \mu |Q_1|\le \eps_1.
\]

Let $\tau,\delta$ be the constants in Corollary \ref{cor:osc}. Let $k$ be the minimum nonnegative integer such that the condition \eqref{eq:stopping condition} does not hold. If $k=\infty$, then it follows immediately from Corollary \ref{cor:osc} that
\[
|\nabla u(x,t)|\le C(|x|+\sqrt{|t|})^\alpha\quad\mbox{for all }(x,t)\in Q_1,
\]
where $C=(1-\delta)^{-1}$ and $\alpha=\log(1-\delta)/\log\tau$.

If $k$ is finite, then
\[
|\{(x,t)\in Q_{\tau^k}: \nabla u\cdot e\le \ell (1-\delta)^k\}|\le \mu |Q_{\tau^k}|\quad\mbox{for some }e\in\mathbb S^{n-1}.
\]
Let \[v(x,t)=\frac{1}{\tau^k (1-\delta)^k}u(\tau^k x, \tau^{2k} t).\] Then $v$ satisfies
\[
v_t=\Delta v+(p-2)\frac{v_iv_j}{|\nabla v|^2+\eps (1-\delta)^{-2k}} v_{ij}\quad\mbox{in } Q_1,
\]
and
\[
|\{(x,t)\in Q_{1}: \nabla v\cdot e\le \ell\}|\le \mu |Q_{1}|\quad\mbox{for some }e\in\mathbb S^{n-1}.
\]
Consequently,
\[ \left\vert \left\{ (x,t) \in Q_1 : |\nabla v - e| > \eps_0 \right\} \right\vert \leq \eps_1.\]
Since condition \eqref{eq:stopping condition} holds for $k-1$, then $|\nabla v|\le 1$ in $Q_1$. It follows from Lemma \ref{l:gradientarounde2smalloscillation} that there exists $a\in\R$ such that
\[ |v(x,t) - a - e \cdot x| \leq \eta \quad \text{ for all } (x,t) \in Q_{1/2}.\]
By Theorem \ref{l:smallness to regularity} that 
there exists $b\in\R^n$ such that
\[
|\nabla v-b|\le C (|x|+\sqrt{|t|})  \quad\mbox{for all }(x,t)\in Q_{1/4}.
\]
Since $|\nabla v|\le 1$ and $|b|\le 1$,  there also holds
\[
|\nabla v-b|\le C(|x|+\sqrt{|t|})  \quad\mbox{for all }(x,t)\in Q_1.
\]
Rescaling back, we obtain
\[
|\nabla u-(1-\delta)^k b|\le C(1-\delta)^k \tau^{-k}(|x|+\sqrt{|t|}) \le C(|x|+\sqrt{|t|})^\alpha\quad\mbox{for all }(x,t)\in Q_{\tau^k}.
\]

On the other hand, we know that
\[
|\nabla u|< (1-\delta)^{i}\quad\mbox{in }Q_{\tau^{i}} \mbox{ and for all }i=0,\cdots, k.
\]
This implies that
\[
|\nabla u-(1-\delta)^k b|\le C(|x|+\sqrt{|t|})^\alpha \quad\mbox{for all }(x,t)\in Q_{1}\setminus Q_{\tau^k}.
\]
Therefore,
\[
|\nabla u-(1-\delta)^k b|\le C(|x|+\sqrt{|t|})^\alpha \quad\mbox{for all }(x,t)\in Q_{1}.
\]

In conclusion, we have proved that there exist $q\in\R^n$ with $|q|\le 1$, and two positive constants $\alpha, C$ such that
\[
|\nabla u(x,t)-q|\le C(|x|+\sqrt{|t|})^\alpha \quad\mbox{for all }(x,t)\in Q_{1}.
\]
By standard translation arguments, it follows that
\begin{equation}\label{eq:appr c1alpha}
 \|\nabla u\|_{C^\alpha(Q_{1/2})}\le C(\|u\|_{L^\infty(Q_1)}+\va).
\end{equation}

Now, we are going to prove the $C^{\frac{1+\alpha}{2}}$ continuity of $u$ in the time variable $t$.

Let $t\in[-1/4,0)$ and $r=\sqrt{|t|}$. For $(y,s)\in Q_r$, let
\[
w(y,s)=u(y,s)-u(0,0)-\nabla u(0,0)\cdot y.
\]
By \eqref{eq:appr c1alpha}, we have
\begin{equation}\label{eq:appr c1alpha2}
|u(y,s)-u(0,s)-\nabla u(0,s)\cdot y|\le C(\|u\|_{L^\infty(Q_1)}+\va) |y|^{1+\alpha},
\end{equation}
Therefore, for $y_1,y_2\in B_r$, 
\begin{align*}
&|w(y_1, s)-w(y_2,s)|\\
&=|u(y_1,s)-u(y_2,s)-\nabla u(0,0)\cdot (y_1-y_2)|\\
&\le |(\nabla u(0,s)-\nabla u(0,0))\cdot (y_1-y_2)|+C(\|u\|_{L^\infty(Q_1)}+\va) r^{1+\alpha}\\
&\le C(\|u\|_{L^\infty(Q_1)}+\va) |s|^{\frac{\alpha}{2}}|y_1-y_2|+C(\|u\|_{L^\infty(Q_1)}+\va) r^{1+\alpha}\\
&\le C(\|u\|_{L^\infty(Q_1)}+\va) r^{1+\alpha},
\end{align*}
where in the first inequality we used \eqref{eq:appr c1alpha2} and in the second inequality we used \eqref{eq:appr c1alpha}. Since $u$ satisfies \eqref{eq:main va}, $w$ satisfies a uniformly parabolic equation as well. By Lemma \ref{l:timeslices2allcylinder}, we have
\[
\osc_{Q_r} w\le C(\|u\|_{L^\infty(Q_1)}+\va) r^{1+\alpha}.
\]
In particular,
\[
|u(0,t)-u(0,0)|\le C(\|u\|_{L^\infty(Q_1)}+\va) |t|^{\frac{1+\alpha}{2}}.
\]
By standard translation arguments, it follows that
\[
\sup_{(x,t), (x,s)\in Q_{1/2}}\frac{|u(x,t)-u(x,s)|}{|t-s|^{\frac{1+\alpha}{2}}}\le C(\|u\|_{L^\infty(Q_1)}+\va).
\]

This finishes the proof of this theorem.
\end{proof}

\section{Approximations and the proof of our main result}\label{sec:appr}
This section is devoted to the final step of our proof of Theorem \ref{thm:mainholdergradient}, that is the approximation step.

Note that \eqref{eq:main va} is a uniformly parabolic quasilinear equation and its coefficients $a_{ij}(q)$ as in \eqref{eq:aij} are smooth with bounded derivatives (for each value of $\eps>0$). The next lemma follows directly from classical quasilinear equations theory (see, e.g., Theorem 4.4 of \cite{LSU} in page 560) and the Schauder estimates.

\begin{lem}\label{lem:app1}
Let $g\in C(\partial_p Q_1)$. For $\va>0$, there exists a unique solution $u^\va\in C^{\infty}(Q_1)\cap C(\overline Q_1)$ of \eqref{eq:main va} such that $u^\va=g$ on $\partial_p Q_1$.
\end{lem}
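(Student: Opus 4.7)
The plan is to combine the existence theorem for smooth compatible boundary data quoted from \cite{LSU} with an interior Schauder bootstrap, and then to remove the smoothness and compatibility assumptions on $g$ by an approximation argument. Throughout, $\eps>0$ is fixed, so the coefficients $a_{ij}(q)$ in \eqref{eq:aij} are $C^\infty$ on $\R^n$ with bounded derivatives of every order (because $|q|^2+\eps^2\ge\eps^2>0$), and \eqref{eq:main va} is uniformly parabolic with ellipticity constants $\min(p-1,1)$ and $\max(p-1,1)$ independent of $u$.

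First I would treat the case in which $g\in C^{2+\alpha,1+\alpha/2}(\partial_p Q_1)$ satisfies the compatibility conditions at the corner $\partial B_1\times\{-1\}$ required by Theorem 4.4 of Chapter V of \cite{LSU}. That theorem yields a unique classical solution $u\in C^{2+\alpha,1+\alpha/2}(\overline Q_1)$ of the Dirichlet problem. Since $a_{ij}$ is smooth in $q$, differentiating \eqref{eq:main va} and viewing each spatial derivative $\partial_k u$ as a solution of a linear uniformly parabolic equation whose coefficients share the regularity of $\nabla u$, one obtains $u\in C^\infty(Q_1)$ by iterating interior Schauder estimates on shrinking subdomains.

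For a general $g\in C(\partial_p Q_1)$, I would approximate $g$ uniformly on $\partial_p Q_1$ by a sequence $g_m$ of smooth, compatible data (standard mollification combined with a cut-off near the corner) and let $u_m$ be the smooth solutions produced by the previous step. The comparison principle for \eqref{eq:main va} (valid since the equation is uniformly parabolic with smooth coefficients) yields
\[
\|u_m-u_{m'}\|_{L^\infty(\overline Q_1)}\le\|g_m-g_{m'}\|_{L^\infty(\partial_p Q_1)},
\]
so $u_m\to u^\eps$ uniformly on $\overline Q_1$ for some $u^\eps\in C(\overline Q_1)$ with $u^\eps=g$ on $\partial_p Q_1$, and uniqueness of $u^\eps$ follows from the same comparison principle.

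The main technical obstacle is to check that the $u_m$ have uniform-in-$m$ interior $C^k$ bounds, so that $u^\eps\in C^\infty(Q_1)$ by Arzel\`a--Ascoli. Starting from the $m$-independent $L^\infty$ bound $\|u_m\|_\infty\le\|g\|_\infty+1$ from the maximum principle, Theorem \ref{thm:interior holder} provides a uniform interior $C^\alpha$ estimate; the classical quasilinear gradient H\"older estimate for uniformly parabolic equations with smooth structure (Chapter V of \cite{LSU}) then upgrades this to a uniform interior $C^{1,\alpha}$ estimate on any $K\Subset Q_1$. Once $\nabla u_m$ is uniformly H\"older on $K$, the coefficients $a_{ij}(\nabla u_m)$ are uniformly H\"older, and applying the Schauder estimates repeatedly to the linear equation $\partial_t u_m=a_{ij}(\nabla u_m)\partial_{ij}u_m$ and to its differentiated versions yields uniform $C^k(K)$ bounds for every $k$, completing the argument.
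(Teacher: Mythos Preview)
Your proposal is correct and follows essentially the same route the paper indicates: the paper gives no detailed proof at all, merely remarking that the lemma ``follows directly from classical quasilinear equations theory (see, e.g., Theorem 4.4 of \cite{LSU} in page 560) and the Schauder estimates.'' Your write-up supplies the details the paper omits---in particular the approximation step passing from smooth compatible boundary data to general $g\in C(\partial_p Q_1)$ via the comparison principle and uniform interior estimates---so you are fleshing out exactly the argument the authors had in mind.
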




We are now ready to prove Theorem \ref{thm:mainholdergradient} by taking $\eps \to 0$ in the a priori estimate of Theorem \ref{thm:holdergradient}.

\begin{proof}[Proof of Theorem \ref{thm:mainholdergradient}]
Without loss of generality, we assume that $u\in C(\overline Q_1)$. Let $\omega$ be its modulus of continuity in $\overline Q_1$. By Lemma \ref{lem:app1}, for $\va\in (0,1)$, there exists a unique solution $v^\va \in C^\infty(Q_1)\cap C(\overline Q_1)$ of \eqref{eq:main va} such that $v^\va=u$ on $\partial_p Q_1$. Moreover, it follows from Theorem \ref{prop:boundary regularity} that there exists a modulus of continuity $\omega^*$, which depends only on $n,p,\omega, \|u\|_{L^\infty(\partial_p Q_1)}$, such that
\[
 |v^\va(x,t)-v^\va(y,s)|\le\omega^*(|x-y|\vee\sqrt{|s-t|})\quad\mbox{ for all }(x,t), (y,s)\in\overline Q_1.
\]
By the maximum principle, 
\[
 \|v^\va\|_{L^\infty(Q_1)} \le \|u\|_{L^\infty(\partial_p Q_1)}.
\]
It follows from Ascoli-Arzela theorem that there exists a subsequence $\{v^{\va_k}\}$ such that $v^{\va_k}\to v\in C(\overline Q_1)$ uniformly in $\overline Q_1$ as $\va_k\to 0$. By the stability property in Theorem \ref{thm:stability}, $v$ is a viscosity solution of \eqref{eq:main1}. By the comparison principle in Theorem \ref{thm:comparison principle}, we obtain that $u\equiv v$ in $\overline Q_1$.

On the other hand, it follows from Theorem \ref{thm:holdergradient} that, subject to a subsequence, $\nabla v^{\va_k}$ converges in $C^\al(Q_{1/2})$ for some constant $\al$ depending only on $n$ and $p$. Therefore, $u$ is differentiable in $x$ everywhere in $Q_{1/2}$, and thus, $\nabla v^{\va_k}$ converges to $\nabla u$ in $C^\al(Q_{1/2})$. Since 
\[
 \|\nabla v^{\va_k}\|_{C^\alpha(Q_{1/2})}\le C(\|v^{\va_k}\|_{L^\infty(Q_1)}+\va_k)\le C(\|u\|_{L^\infty(Q_1)}+\va_k),
\]
where $C>0$ depends only on $n$ and $p$, we obtain
\[
 \|\nabla u\|_{C^\alpha(Q_{1/2})}\le C\|u\|_{L^\infty(Q_1)}
\]
by sending $k\to\infty$.

We also know from Theorem \ref{thm:holdergradient} that for all $(x,t), (x,s)\in Q_{1/2}$, there holds
\[
|v^{\va_k}(x,t)-v^{\va_k}(x,s)|\le C(\|u\|_{L^\infty(Q_1)}+\va_k) |t-s|^{\frac{1+\alpha}{2}}.
\]
By sending $k\to\infty$, we obtain
\[
\sup_{(x,t), (x,s)\in Q_{1/2}}\frac{|u(x,t)-u(x,s)|}{|t-s|^{\frac{1+\alpha}{2}}}\le C\|u\|_{L^\infty(Q_1)}.
\]

This finishes the proof of Theorem \ref{thm:mainholdergradient}.
\end{proof}

\appendix
\section{Appendix A} \label{A}

In this section we provide a proof of Lemma \ref{lem:subsolution}

\begin{proof} [Proof of Lemma \ref{lem:subsolution}.]
In the following, we  denote $V=|\nabla u|^2+\va^2$. First,
\begin{align*}
\partial_t \varphi&= pV^\frac{p-2}{2} \nabla u\cdot \nabla u_t\\
&= pV^\frac{p-2}{2} u_k \Big(\Delta u_k-2(p-2)V^{-2}u_l u_{kl} u_i u_j u_{ij}+2(p-2)V^{-1}u_{ik} u_{ij}u_j\\
&\quad\quad+(p-2) V^{-1}u_iu_ju_{ijk}\Big)\\
&=p V^\frac{p-2}{2} \Big(u_k\Delta u_k-2(p-2)V^{-2}(\Delta_\infty u)^2+2(p-2)V^{-1}|\nabla^2 u  \nabla u|^2\\
&\quad\quad+(p-2) V^{-1}u_iu_ju_ku_{ijk}\Big),
\end{align*}
where $\Delta_\infty u=\sum_{i,j} u_{ij}u_iu_j$. Secondly,
\[
\partial_j \varphi=  pV^\frac{p-2}{2} u_k  u_{kj},
\]
and therefore,
\begin{align*}
\partial_{ij} \varphi &= p(p-2)V^\frac{p-4}{2}u_l u_{li} u_k  u_{kj}+ pV^\frac{p-2}{2}u_{ki} u_{kj}+ pV^\frac{p-2}{2}u_{k} u_{kij}.
\end{align*}
Consequently,
\begin{align*}
a_{ij}(\nabla u) \partial_{ij} \varphi&=p(p-2)V^\frac{p-4}{2}u_l u_{li} u_k  u_{ki}+ pV^\frac{p-2}{2}u_{ki} u_{ki}+ pV^\frac{p-2}{2}u_{k} u_{kii}\\
&\quad + p(p-2)^2 V^\frac{p-6}{2} u_i u_j u_l u_{li} u_k  u_{kj} \\
&\quad + p(p-2)V^\frac{p-4}{2} u_{ki} u_{kj}u_iu_j\\
&\quad +p(p-2)V^\frac{p-4}{2} u_i u_j u_k  u_{ijk} \\
&=2p(p-2)V^\frac{p-4}{2}|\nabla^2 u  \nabla u|^2 + pV^\frac{p-2}{2}|\nabla^2 u|^2 + pV^\frac{p-2}{2}u_{k}  \Delta u_k\\
&\quad + p(p-2)^2 V^\frac{p-6}{2}(\Delta_\infty u)^2\\
&\quad +p(p-2)V^\frac{p-4}{2} u_i u_j u_k  u_{ijk}.
\end{align*}
Therefore,
\begin{align*}
\big(\partial_t -a_{ij}(\nabla u)\partial_{ij}\big)\varphi=pV^\frac{p-6}{2}\Big(p(2-p)(\Delta_\infty u)^2-|\nabla^2 u|^2 V^2\Big)\le 0,
\end{align*}
where in the last inequality we used the H\"older inequality that
\begin{align*}
(\Delta_\infty u)^2&=\left(\sum_{i,j} u_{ij}u_iu_j\right)^2\\
&\le \left(\sum_{i,j} u_{ij}^2\right)\left(\sum_{i,j} u_i^2 u_j^2\right)=|\nabla^2 u|^2|\nabla u|^4\le |\nabla^2 u|^2 V^2.
\end{align*}
\end{proof}

\section{Appendix B}\label{sec:appB}
In this second appendix, we shall prove the boundary estimates in Proposition \ref{prop:boundary regularity}. Recall that for two real numbers $a$ and $b$, we denote $a\vee b=\max(a,b)$, $a\wedge b=\min(a,b)$.
\begin{lem}\label{lem:super1}
There exists a non negative continuous function $\psi: \R^n\times(-\infty,0]\to \R$ such that
\begin{itemize}
 \item $\psi=0\quad\mbox{in } B_1\times\{t=0\}$;
\item $\psi_t-a_{ij}(x,t)\psi_{ij}\ge 0 \quad\mbox{in } (\R^n\setminus B_1)\times (-\infty,0]$;
\item $\psi\ge 1 \quad\mbox{in } (\R^n\times (-\infty,0])\setminus (B_2\times [-1,0])$,
\end{itemize}
where $a_{ij}(x,t)$ satisfies \eqref{eq:ellipticity}.
\end{lem}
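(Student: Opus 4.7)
The plan is to exhibit an explicit barrier of the form
\[
\psi(x,t) = A\bigl(1 - e^{-\beta(|x|^2-1)_+ + \mu t}\bigr),
\]
with positive constants $A,\beta,\mu$ depending only on $n,\lambda,\Lambda$ to be chosen in this order. This formula collapses to $A(1-e^{\mu t})$ inside $B_1$ and is smooth outside.

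First I would check the elementary properties. Continuity on $\R^n\times(-\infty,0]$ is clear, $\psi\ge 0$ since the exponent is non-positive (both $\mu t\le 0$ and $-\beta(|x|^2-1)_+\le 0$), and $\psi(x,0)=A(1-e^0)=0$ for $|x|\le 1$, giving the first bulleted property.

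Next, in $\{|x|>1\}$ where $\psi$ is smooth, a direct differentiation yields
\[
\psi_t - a_{ij}\psi_{ij} = A e^{-\beta(|x|^2-1)+\mu t}\bigl(4\beta^2\, a_{ij}x_i x_j - 2\beta\, a_{ii} - \mu\bigr),
\]
and the ellipticity bounds $a_{ij}x_ix_j \ge \lambda|x|^2\ge \lambda$ and $a_{ii}\le n\Lambda$ reduce the second bulleted property to the single pointwise inequality
\[
4\lambda\beta^2 - 2n\Lambda\beta - \mu \ge 0,
\]
which, after fixing any $\mu>0$, is achieved by taking $\beta$ sufficiently large in terms of $n,\lambda,\Lambda,\mu$.

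Finally, for the third bulleted property, bounding the exponent from above gives $\psi\ge A(1-e^{-3\beta})$ whenever $|x|\ge 2$, and $\psi\ge A(1-e^{-\mu})$ whenever $t\le -1$; choosing $A$ larger than the maximum of the reciprocals of these two quantities then forces $\psi\ge 1$ off $B_2\times[-1,0]$. There is no serious obstacle: the only delicate point is getting the order of selection right ($\mu$ first, then $\beta$, then $A$), after which everything is a routine differentiation and an algebraic estimate.
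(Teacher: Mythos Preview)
Your proposal is correct. The barrier
\[
\psi(x,t)=A\bigl(1-e^{-\beta(|x|^2-1)_+ + \mu t}\bigr)
\]
does everything the lemma asks, and your verification of the three bulleted properties is accurate; in particular the computation of $\psi_t-a_{ij}\psi_{ij}$ and the reduction to $4\lambda\beta^2-2n\Lambda\beta-\mu\ge 0$ is right.

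The paper takes a different explicit barrier. It sets $v(x)=\sqrt{(|x|-1)^+}$, uses that $-a_{ij}v_{ij}\to+\infty$ as $|x|\to1^+$ to get $-a_{ij}v_{ij}\ge 1$ on a thin annulus $1<|x|<1+\delta$, and then defines
\[
\psi=\min\bigl(\delta^{-1/2}v(x)-t,\,1\bigr).
\]
Here the spatial part is simpler but blows up, so it must be capped at $1$; the supersolution property off the thin annulus holds trivially because $\psi\equiv1$ there, and on the annulus it comes from the $v$ estimate together with $\psi_t=-1$. Your construction instead builds the boundedness into the exponential from the start, at the price of one more parameter and a slightly longer derivative computation. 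Both approaches are elementary and of comparable length; yours has the mild advantage that $\psi$ is $C^\infty$ in $\{|x|>1\}$ without any truncation, while the paper's has the advantage that the final function is literally equal to $1$ outside $B_{1+\delta}\times[-1,0]$, which makes the third bullet immediate. Either route is perfectly adequate for how the lemma is used later.
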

\begin{proof}
 Let $v(x)=\sqrt{(|x|-1)^+}$. It follows from elementary calculations that there exists $\delta\in(0,1)$ such that
\[
 -a_{ij}v_{ij}\ge 1\quad\mbox{for } 1<|x|<1+\delta.
\]
Then $\psi=\min(\delta^{-1/2}v(x)-t,1)$ is a desired function.
\end{proof}

\begin{lem}\label{lem: boundary1}
 Let $u\in C(\overline Q_1)$ be a solution of \eqref{eq:linear parabolic} satisfying \eqref{eq:ellipticity}. 
Let $(x,t)\in \partial B_1\times (-1,0]$ be fixed, $\rho$ be a modulus of continuity such that
\[
 |u(y,s)-u(x,t)|\le \rho(|x-y|\vee\sqrt{|t-s|})
\]
for all $(y,s)\in \partial_p(B_1\times (-1,t])$. Then there exists another modulus of continuity $\rho^*$ depending only on $n,\lambda,\Lambda, \rho, \|u\|_{L^\infty(\partial_p Q_1)}$ such that
\[
 |u(x,t)-u(y,s)|\le \rho^*(|x-y|\vee\sqrt{|t-s|})
\]
for all $(y,s)\in \overline B_1\times [-1,t]$.
\end{lem}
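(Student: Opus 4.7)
The plan is the classical barrier-plus-comparison argument. Set $(x_0,t_0):=(x,t)$ and $v:=u-u(x_0,t_0)$. It suffices to derive a one-sided modulus for $v$, since applying the same argument to $-u$ yields the reverse.

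For each small $r>0$ I attach a rescaled barrier to the exterior ball $B_r((1+r)x_0)$, which is externally tangent to $B_1$ at $x_0$. Apply Lemma \ref{lem:super1} with the rescaled coefficients $\tilde a_{ij}(z,\tau):=a_{ij}((1+r)x_0+rz,\,t_0+r^2\tau)$ (uniform parabolicity is preserved) to obtain a continuous supersolution $\psi$, and set
\[
\psi_r(y,s):=\psi\!\left(\frac{y-(1+r)x_0}{r},\,\frac{s-t_0}{r^2}\right).
\]
A direct rescaling shows $\psi_r$ satisfies $\partial_s\psi_r-a_{ij}(y,s)\partial_{ij}\psi_r\ge 0$ wherever the first argument lies outside the unit ball, i.e.\ wherever $|y-(1+r)x_0|>r$. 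Since $|y-(1+r)x_0|^2=|y-x_0|^2-2r(y-x_0)\!\cdot\! x_0+r^2\ge r^2$ for every $y\in\overline B_1$, with equality only at $y=x_0$, the function $\psi_r$ is a nonnegative continuous viscosity supersolution of the equation on $\overline B_1\times[-1,t_0]$. Moreover $\psi_r(x_0,t_0)=\psi(-x_0,0)=0$ by continuity, and by the third bullet of Lemma \ref{lem:super1}, $\psi_r\ge 1$ whenever $|y-(1+r)x_0|\ge 2r$ or $s\le t_0-r^2$.

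For the comparison step, split the parabolic boundary $\partial_p(B_1\times(-1,t_0])$ by whether $|x_0-y|\vee\sqrt{|t_0-s|}\le 3r$. On the near part the hypothesis gives $v\le\rho(3r)$. On its complement either $|y-x_0|>3r$, so $|y-(1+r)x_0|\ge|y-x_0|-r>2r$, or $|t_0-s|>9r^2>r^2$; in both cases $\psi_r\ge 1$, while trivially $v\le 2\|u\|_{L^\infty(\partial_pQ_1)}$. Therefore
\[
v\le \rho(3r)+2\|u\|_{L^\infty(\partial_pQ_1)}\psi_r \qquad\text{on }\partial_p(B_1\times(-1,t_0]),
\]
and the viscosity comparison principle (absorbing the mild non-smoothness of $\psi$ at $\partial B_1$) extends the inequality to all of $\overline B_1\times[-1,t_0]$.

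To produce $\rho^*$, for a point $(y,s)$ at parabolic distance $d:=|x_0-y|\vee\sqrt{|t_0-s|}$ from $(x_0,t_0)$, choose $r=\sqrt{d}$ (assumed smaller than a fixed threshold). Then $\rho(3r)=\rho(3\sqrt d)\to 0$ as $d\to 0$, and the continuity of $\psi$ at $(-x_0,0)$, whose own modulus $\omega_\psi$ depends only on $n,\lambda,\Lambda$, yields $\psi_r(y,s)\le\omega_\psi\!\left(|y-x_0|/r\vee\sqrt{|s-t_0|}/r\right)=\omega_\psi(\sqrt d)$. Hence $\rho^*(d):=\rho(3\sqrt d)+2\|u\|_{L^\infty(\partial_pQ_1)}\,\omega_\psi(\sqrt d)$ works, with the advertised dependence. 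The main technical point is the supersolution property of $\psi_r$ up to the contact point $x_0$, which is delivered cleanly by the exterior ball tangency, so the only real work is in the bookkeeping of the two-scale decomposition in the last paragraph.
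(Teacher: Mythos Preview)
Your proof is correct and follows essentially the same approach as the paper: build a rescaled barrier from Lemma~\ref{lem:super1} centered at the exterior tangent point $(1+r)x_0$, compare on the parabolic boundary via the near/far dichotomy at scale $3r$, and conclude by the maximum principle. The only cosmetic differences are that the paper runs the comparison on the smaller domain $(B_{3r}(x)\cap B_1)\times(-1,t]$ and then observes the bound extends trivially to the rest of the cylinder, whereas you compare directly on $B_1\times(-1,t_0]$ (legitimate, since $B_r((1+r)x_0)$ lies entirely outside $B_1$); and the paper sets $\rho^*(d)=\inf_{r\in(0,1)}\big(\rho(3r)+2\|u\|_{L^\infty(\partial_pQ_1)}\bar\rho(d/r)\big)$ instead of your specific choice $r=\sqrt d$.
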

\begin{proof}
Fix $r\in (0,1)$.  
Let $x_r=(1+r)x$ and $\psi$ be as in Lemma \ref{lem:super1}. Define
\[
 v(y,s)=u(x,t)+\rho(3r)+2\|u\|_{L^\infty(\partial_p Q_1)}\psi\left(\frac{y-x_r}{r},\frac{s-t}{r^2}\right).
\]
Then
\[
v_s-a_{ij}v_{ij}\ge 0\quad\mbox{in } \Omega:=(B_{3r}(x)\cap B_1)\times(-1,t]. 
\]
For $(y,s)\in\partial_p \Omega$ and $|y-x|\vee \sqrt{|s-t|}< 3r$, then
\[
 v(y,s)\ge u(x,t)+\rho(3r)\ge u(y,s).
\]
For $(y,s)\in\partial_p \Omega$ and $|y-x|\vee \sqrt{|s-t|}\ge 3r$, then
\[
 v(y,s)\ge u(x,t)+2\|u\|_{L^\infty(\partial_p Q_1)}=u(x,t)+2\|u\|_{L^\infty(Q_1)}\ge u(y,s).
\]
It follows from the maximum principle that $v\ge u$ in $\Omega$, i.e., 
\[
\rho(3r)+2\|u\|_{L^\infty(\partial_p Q_1)}\psi\left(\frac{y-x_r}{r},\frac{s-t}{r^2}\right)\ge u(y,s)-u(x,t).
\]
Similarly, one can show that
\[
\rho(3r)+2\|u\|_{L^\infty(\partial_p Q_1)}\psi\left(\frac{y-x_r}{r},\frac{s-t}{r^2}\right)\ge u(x,t)-u(y,s).
\]
Therefore, for $(y,s)\in\overline \Omega$.
\begin{equation}\label{eq:boundary-aux}
 |u(x,t)-u(y,s)|\le \rho(3r)+2\|u\|_{L^\infty(\partial_p Q_1)}\psi\left(\frac{y-x_r}{r},\frac{s-t}{r^2}\right).
\end{equation}
It is clear from the definition of $\psi$ that \eqref{eq:boundary-aux} holds for $(y,s)\in (B_1\setminus B_{3r}(x))\times(-1,t]$ as well.  
Meanwhile
\[
\psi\left(\frac{y-x_r}{r},\frac{s-t}{r^2}\right)=\psi\left(\frac{y-x_r}{r},\frac{s-t}{r^2}\right)-\psi\left(\frac{x-x_r}{r},0\right)\le \overline\rho((|x-y|\vee \sqrt{|t-s|})/r),
\]
where $\overline \rho$ is a modulus continuity of $\psi$. Therefore, we have for $(y,s)\in\overline B_1\times [-1,t]$,
\[
  |u(x,t)-u(y,s)|\le \rho(3r)+2\|u\|_{L^\infty(\partial_p Q_1)}\overline\rho((|x-y|\vee \sqrt{|t-s|})/r).
\]
The conclusion then follows from the observation that
\[
 \rho^*(d)=\inf_{r\in (0,1)} \left(\rho(3r)+2\|u\|_{L^\infty(\partial_p Q_1)}\overline\rho(d/r)\right)
\]
is a modulus of continuity.
\end{proof}

\begin{lem}\label{lem: boundary2}
 Let $t\in [-1,0)$ and $u\in C(\overline B_1\times [t,0])$ be a solution of \eqref{eq:linear parabolic} in $B_1\times (t,0]$ satisfying \eqref{eq:ellipticity}. 
Let $x\in \overline B_1$ be fixed, $\rho$ be a modulus of continuity such that
\[
 |u(y,s)-u(x,t)|\le \rho(|x-y|\vee\sqrt{|s-t|})
\]
for all $(y,s)\in \partial_p ( B_1\times (t,0])$.  Then there exists another modulus of continuity $\rho^*$ depending only on $n,\lambda,\Lambda, \rho$ and  $\|u\|_{L^\infty(\partial_p (B_1\times (t,0]))}$ such that
\[
 |u(x,t)-u(y,s)|\le \rho^*(|x-y|\vee\sqrt{|s-t|})
\]
for all  $(y,s)\in \overline B_1\times [t,0]$.
\end{lem}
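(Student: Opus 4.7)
My plan is to mirror the barrier argument of Lemma~\ref{lem: boundary1}, but with a simpler barrier adapted to the bottom (initial-time) face of the parabolic cylinder. The crucial simplification relative to the lateral case is that no exterior sphere condition in the spatial variable is needed: both spatial and temporal distances from $(x,t)$ can be controlled by an explicit polynomial in $(y-x,\,s-t)$, which is automatically a classical supersolution for every linear uniformly parabolic operator with coefficients satisfying \eqref{eq:ellipticity}.

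Concretely, for a scale parameter $r\in(0,1)$ I would introduce
\[
\phi_r(y,s)=\frac{|y-x|^2+C_0(s-t)}{9r^2},
\]
where $C_0:=2n\Lambda$ is chosen so that $\partial_s\phi_r-a_{ij}(y,s)\,\partial_{ij}\phi_r\ge 0$ for every admissible $a_{ij}$. Indeed $\partial_{ij}\phi_r=\tfrac{2}{9r^2}\delta_{ij}$, so $a_{ij}\partial_{ij}\phi_r=\tfrac{2}{9r^2}\sum_i a_{ii}\le\tfrac{2n\Lambda}{9r^2}=\partial_s\phi_r$. Clearly $\phi_r(x,t)=0$, and $\phi_r(y,s)\ge 1$ whenever $|y-x|\vee\sqrt{s-t}\ge 3r$.

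Setting $M:=\|u\|_{L^\infty(\partial_p(B_1\times(t,0]))}$, define the upper barrier
\[
v(y,s)=u(x,t)+\rho(3r)+2M\,\phi_r(y,s).
\]
A two-case check on $\partial_p(B_1\times(t,0])$ shows $v\ge u$ there: when $|y-x|\vee\sqrt{s-t}<3r$, the hypothesis on $\rho$ gives $u(y,s)\le u(x,t)+\rho(3r)\le v(y,s)$; when $|y-x|\vee\sqrt{s-t}\ge 3r$, one has $\phi_r\ge 1$ and $|u(y,s)|\le M$, so $v(y,s)\ge u(x,t)+2M\ge u(y,s)$. The standard comparison principle for uniformly parabolic linear equations then yields $v\ge u$ in $\overline B_1\times[t,0]$, and the symmetric lower barrier gives the matching reverse inequality, so that
\[
|u(y,s)-u(x,t)|\le\rho(3r)+2M\,\phi_r(y,s)\quad\text{for every }(y,s)\in\overline B_1\times[t,0].
\]

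To conclude, for $(y,s)\in\overline B_1\times[t,0]$ and $d:=|y-x|\vee\sqrt{s-t}$ I note that $\phi_r(y,s)\le(1+C_0)d^2/(9r^2)$, so the inequality above holds for every $r\in(0,1)$. Taking the infimum,
\[
\rho^*(d):=\inf_{r\in(0,1)}\left(\rho(3r)+\tfrac{2(1+C_0)M}{9}\cdot\tfrac{d^2}{r^2}\right),
\]
depends only on $n,\lambda,\Lambda,\rho,M$; choosing e.g.\ $r=\sqrt{d}$ for small $d$ shows $\rho^*(d)\to 0$ as $d\to 0$, so $\rho^*$ is a genuine modulus of continuity. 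There is no serious technical obstacle here beyond pinning down the constant $C_0$ in the quadratic barrier so that it is a supersolution uniformly in the admissible coefficients; the rest is a direct repetition of the structure already used for Lemma~\ref{lem: boundary1}.
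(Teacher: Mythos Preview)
Your proposal is correct and follows essentially the same barrier-and-comparison strategy as the paper's proof, which also sets $v(y,s)=u(x,t)+\rho(r)+2M\,\phi\bigl((y-x)/r,(s-t)/r^2\bigr)$ with $\phi(z,\tau)=b(z)+M\tau$ for an abstract smooth cutoff $b$; your explicit quadratic choice is a perfectly good (indeed simpler) substitute. One tiny remark: the claim that $\phi_r\ge 1$ whenever $\sqrt{s-t}\ge 3r$ needs $C_0\ge 1$, so take $C_0=\max(2n\Lambda,1)$ to be safe.
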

\begin{proof}
Let $b\in C^\infty(\R^n)$ be a nonnegative function such that $b\equiv 1$ in $\R^n\setminus B_1$ and $b(0)=0$. Let
\[
 \phi(y,s)=b(y)+Ms,
\]
where $M=\sup_{B_1\times (t,0]} |a_{ij}| \sup_{\R^n} |\nabla^2 b|+1$, and $\overline\rho$ be its modulus of continuity.  Define
\[
 v(y,s)=u(x,t)+\rho(r)+2\|u\|_{L^\infty(\partial_p (B_1\times (t,0]))}\phi\Big(\frac{y-x}{r},\frac{s-t}{r^2}\Big).
\]
Then
\[
v_s-a_{ij}v_{ij}\ge 0\quad\mbox{in } B_1\times (t,0]. 
\]
For $(y,s)\in\partial_p (B_1\times (t,0])$ and $|y-x|\vee \sqrt{|s-t|}< r$, then
\[
 v(y,s)\ge u(x,t)+\rho(r)\ge u(y,s).
\]
For $(y,s)\in\partial_p (B_1\times (t,0])$ and $|y-x|\vee \sqrt{|s-t|}\ge r$, then either $|y-x|\ge r$ or $|s-t|\ge r^2$, each of which implies that
\[
 v(y,s)\ge u(x,t)+2\|u\|_{L^\infty(\partial_p (B_1\times (t,0]))}=u(x,t)+2\|u\|_{L^\infty(B_1\times (t,0])}\ge u(y,s).
\]
It follows from the maximum principle that $v\ge u$ in $\overline Q_1$, i.e., 
\[
\rho(r)+2\|u\|_{L^\infty(\partial_p (B_1\times (t,0]))}\phi\Big(\frac{y-x}{r},\frac{s-t}{r^2}\Big)\ge u(y,s)-u(x,t).
\]
Similarly, one can show that
\[
\rho(r)+2\|u\|_{L^\infty(\partial_p (B_1\times (t,0]))}\phi\Big(\frac{y-x}{r},\frac{s-t}{r^2}\Big)\ge u(x,t)-u(y,s).
\]
Meanwhile
\[
\phi\left(\frac{y-x}{r},\frac{s-t}{r^2}\right)=\phi\left(\frac{y-x}{r},\frac{s-t}{r^2}\right)-\phi(0,0)\le \overline\rho((|x-y|\vee \sqrt{|s-t|})/r),
\]
where $\overline \rho$ is a modulus continuity of $\phi$. Therefore, we have
\[
  |u(x,t)-u(y,s)|\le \rho(r)+2\|u\|_{L^\infty(\partial_p (B_1\times (t,0]))}\overline\rho((|x-y|\vee \sqrt{|s-t|})/r).
\]
The conclusion then follows from the observation that
\[
 \rho^*(d)=\inf_{r\in (0,1)} \left(\rho(r)+2\|u\|_{L^\infty(\partial_p (B_1\times (t,0]))}\overline\rho(d/r)\right)
\]
is a modulus of continuity.
\end{proof}

\begin{lem}\label{lem: boundary3}
 Let $u\in C(\overline Q_1)$ be a solution of \eqref{eq:linear parabolic} satisfying \eqref{eq:ellipticity}. 
Let $(x,t)\in \partial B_1\times (-1,0]$ be fixed, $\rho$ be a modulus of continuity such that
\[
 |u(y,s)-u(x,t)|\le \rho(|x-y|\vee\sqrt{|t-s|})
\]
for all $(y,s)\in \partial_p Q_1$. Then there exists another modulus of continuity $\rho^*$ depending only on $n,\lambda,\Lambda, \rho, \|u\|_{L^\infty(\partial_p Q_1)}$ such that
\[
 |u(x,t)-u(y,s)|\le \rho^*(|x-y|\vee\sqrt{|t-s|})
\]
for all $(y,s)\in \overline Q_1$.
\end{lem}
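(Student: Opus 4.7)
The plan is to split $\overline Q_1$ along the time slice $\{s = t\}$ into the ``past'' $\overline B_1 \times [-1,t]$ and the ``future'' $\overline B_1 \times [t,0]$ of $(x,t)$, and apply one of the previously stated lemmas to each piece. First I would apply Lemma \ref{lem: boundary1} to the past: since $\partial_p(B_1 \times (-1,t]) \subset \partial_p Q_1$, the hypothesis of the present lemma supplies the required modulus $\rho$ on $\partial_p(B_1 \times (-1,t])$, and Lemma \ref{lem: boundary1} yields a modulus $\rho_1^*$, depending only on $n,\lambda,\Lambda,\rho,\|u\|_{L^\infty(\partial_p Q_1)}$, that controls $|u(x,t) - u(y,s)|$ for all $(y,s) \in \overline B_1 \times [-1,t]$. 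If $t = 0$, this already covers all of $\overline Q_1$ and we are done, so I will assume $t \in (-1, 0)$ from here on.

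For the future piece $\overline B_1 \times [t,0]$, the idea is to restart the problem on the shifted cylinder $B_1 \times (t,0]$ and apply Lemma \ref{lem: boundary2} there. The parabolic boundary of this new cylinder splits into the lateral part $\partial B_1 \times (t,0] \subset \partial_p Q_1$ and the initial slice $B_1 \times \{t\}$. On the lateral part, the hypothesis of the present lemma gives the modulus $\rho$ relative to $(x,t)$; on the initial slice, the step just completed gives $|u(y',t) - u(x,t)| \le \rho_1^*(|x - y'|)$. Setting
\[
\tilde\rho(r) := \max\bigl(\rho(r), \rho_1^*(r)\bigr),
\]
we obtain a bona fide modulus of continuity of $u$ on $\partial_p(B_1 \times (t,0])$ relative to $(x,t)$, in the sense required by Lemma \ref{lem: boundary2}. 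That lemma, applied at $(x,t) \in \overline B_1$ with modulus $\tilde\rho$, produces a modulus $\rho_2^*$ controlling $|u(x,t) - u(y,s)|$ throughout $\overline B_1 \times [t,0]$. Its parameter dependence collapses to the required one, since $\|u\|_{L^\infty(\partial_p(B_1 \times (t,0]))} \le \|u\|_{L^\infty(\partial_p Q_1)}$ by the maximum principle on $Q_1$. Finally, setting $\rho^*(d) := \max(\rho_1^*(d), \rho_2^*(d))$ gives the desired modulus on all of $\overline Q_1$.

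I do not anticipate any serious obstacle here: the argument is essentially a clean gluing of Lemmas \ref{lem: boundary1} and \ref{lem: boundary2} across the time slice $\{s = t\}$. The only points that need care are verifying that $\tilde\rho$ is genuinely a modulus of continuity (immediate from $\rho(0) = \rho_1^*(0) = 0$ and the continuity of both) and that all parameter dependencies pass correctly through the two lemmas, which is routine.
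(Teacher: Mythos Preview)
Your proposal is correct and follows essentially the same route as the paper: apply Lemma~\ref{lem: boundary1} to control the past $\overline B_1\times[-1,t]$, then feed that estimate (combined with the original $\rho$ on the lateral boundary) into Lemma~\ref{lem: boundary2} on $B_1\times(t,0]$ to control the future, with the maximum principle handling the $L^\infty$ dependence. Your write-up is in fact slightly more explicit than the paper's, since you spell out the combined modulus $\tilde\rho=\max(\rho,\rho_1^*)$ needed for the second application, whereas the paper leaves this implicit.
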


\begin{proof}
It follows from Lemma \ref{lem: boundary1} that there exists a  modulus of continuity $\rho_1$ depending only on $n,\lambda,\Lambda, \rho, \|u\|_{L^\infty(\partial_p Q_1)}$ such that
\[
 |u(x,t)-u(y,s)|\le \rho_1(|x-y|\vee\sqrt{|t-s|})
\]
for all $(y,s)\in \overline B_1\times [-1,t]$. If  $t< 0$, by applying Lemma \ref{lem: boundary2} to the cylinder $B_1\times(t,0)$ and noticing that $\|u\|_{L^\infty(\partial_p (B_1\times (t,0]))}\le \|u\|_{L^\infty(Q_1)}\le \|u\|_{L^\infty(\partial_p Q_1)}$, we conclude that there exists a  modulus of continuity $\rho_2$ depending only on $n,\lambda,\Lambda, \rho, \|u\|_{L^\infty(\partial_p Q_1)}$ such that
\[
 |u(x,t)-u(y,s)|\le \rho_2(|x-y|\vee\sqrt{|t-s|})
\]
for all $(y,s)\in \overline B_1\times [t,0]$. Finally, the choice of $\rho^*=\rho_1+\rho_2$ is the desired one.
\end{proof}

\begin{cor}\label{cor: boundary}
 Let $u\in C(\overline Q_1)$ be a solution of \eqref{eq:linear parabolic} satisfying \eqref{eq:ellipticity}. 
Let $(x,t)\in \partial_p Q_1$ be fixed, $\rho$ be a modulus of continuity such that
\[
 |u(y,s)-u(x,t)|\le \rho(|x-y|\vee\sqrt{|t-s|})
\]
for all $(y,s)\in \partial_p Q_1$. Then there exists another modulus of continuity $\tilde\rho$ depending only on $n,\lambda,\Lambda, \rho, \|u\|_{L^\infty(\partial_p Q_1)}$ such that
\[
 |u(x,t)-u(y,s)|\le \tilde\rho(|x-y|\vee\sqrt{|t-s|})
\]
for all $(y,s)\in \overline Q_1$.
\end{cor}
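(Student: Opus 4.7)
The plan is a straightforward case analysis on the location of the base point $(x,t)$ within the parabolic boundary, exploiting the decomposition
\[ \partial_p Q_1 = \bigl(\partial B_1 \times (-1,0]\bigr) \cup \bigl(\overline B_1 \times \{-1\}\bigr), \]
and invoking Lemma \ref{lem: boundary3} or Lemma \ref{lem: boundary2} respectively. The substantive work — the one-sided barrier constructions at lateral and bottom boundary points — is already contained in those two lemmas (which in turn rest on the explicit supersolutions $\psi$ and $\phi$ built in Lemma \ref{lem:super1} and Lemma \ref{lem: boundary2}). So the corollary is really a matter of packaging the two pieces together and checking that the hypotheses line up.

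In the first case, $(x,t) \in \partial B_1 \times (-1, 0]$, I simply quote Lemma \ref{lem: boundary3}, which is stated precisely for lateral boundary points and already produces a modulus of continuity $\tilde\rho$ valid on all of $\overline Q_1$ with the required dependence on $n,\lambda,\Lambda,\rho$ and $\|u\|_{L^\infty(\partial_p Q_1)}$. Nothing further is needed here.

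In the second case, $(x,t) \in \overline B_1 \times \{-1\}$ (covering the interior of the bottom face as well as its corner with the lateral boundary), I apply Lemma \ref{lem: boundary2} with the parameter $t$ in that lemma set equal to $-1$. Under this specialization, the cylinder $B_1 \times (t,0]$ coincides with $Q_1$, its parabolic boundary with $\partial_p Q_1$, and $\|u\|_{L^\infty(\partial_p(B_1 \times (t,0]))} = \|u\|_{L^\infty(\partial_p Q_1)}$. Hence the assumption that $\rho$ is a modulus for $u$ along $\partial_p Q_1$ transfers verbatim into the hypothesis of Lemma \ref{lem: boundary2}, and the lemma delivers a modulus of continuity on all of $\overline B_1 \times [-1,0] = \overline Q_1$ with the same dependence on the listed data.

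Taking $\tilde\rho$ to be the modulus supplied by whichever case applies — or, to have a single formula valid in both cases, the sum of the two candidate moduli — completes the argument. I do not anticipate any technical obstacle: the geometrically delicate construction lies in Lemmas \ref{lem:super1}, \ref{lem: boundary1} and \ref{lem: boundary2}, and the only verification left for this corollary is the observation that the two boundary pieces exhaust $\partial_p Q_1$ and that the corner $\partial B_1 \times \{-1\}$ is handled by either case without incompatibility.
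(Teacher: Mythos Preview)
Your proposal is correct and follows exactly the paper's approach: the paper's proof is the single line ``It follows from Lemma \ref{lem: boundary2} and Lemma \ref{lem: boundary3},'' and your case analysis (lateral boundary points via Lemma \ref{lem: boundary3}, bottom face via Lemma \ref{lem: boundary2} with $t=-1$) is precisely the unpacking of that sentence.
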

\begin{proof}
 It follows from Lemma \ref{lem: boundary2} and Lemma \ref{lem: boundary3}.
\end{proof}

\begin{proof}[Proof of Proposition \ref{prop:boundary regularity}]
Let $(x,t), (y,s)\in Q_1$, and we assume that $t\ge s$. Let $$d_X=\min(1-|x|, \sqrt{t+1}),$$ and $x_0$ be such that $|x-x_0|=1-|x|$. Let $\tilde \rho$ be the one in the conclusion of Corollary \ref{cor: boundary}.

\medskip

\emph{Case 1:} $(1-|x|)^2\le (1+t)$. Then $d_X=1-|x|$. 

\medskip

If $(y,s)\in B_{d_X/2}(x)\times (t-d_X^2/4,t]$, then by the interior H\"older estimates Theorem \ref{thm:interior holder}, we have
\[
 d_X^\alpha\frac{|u(x,t)-u(y,s)|}{(|x-y|\vee\sqrt{t-s})^\alpha}\le C \|u-u(x_0,t)\|_{L^\infty(B_{d_X}(x)\times (t-d_X^2,t])}\le C\tilde\rho(2d_X).
\]
Suppose that $2^{-m-1}d_X\le|x-y|\vee\sqrt{t-s}\le 2^{-m}d_X$ for some integer $m\ge 1$. Then
\[
 |u(x,t)-u(y,s)|\le C \frac{\tilde \rho(2^{m+2}(|x-y|\vee\sqrt{t-s}))}{2^{m\alpha}}.
\]
Notice that
\[
 \rho_1(d):=C \sup_{m\ge 1}\frac{\tilde\rho(2^{m+2}d)}{2^{m\alpha}}
\]
is a modulus of continuity, and therefore,
\[
  |u(x,t)-u(y,s)|\le  \rho_1(|x-y|\vee\sqrt{t-s}).
\]

If $(y,s)\not\in B_{d_X/2}(x)\times (t-d_X^2/4,t]$, then
\[
 \begin{split}
  |u(x,t)-u(y,s)|&\le |u(x,t)-u(x_0,t)|+|u(x_0,t)-u(y,s)|\\
&\le \tilde\rho(d_X)+\tilde\rho(|x_0-y|\vee \sqrt{|t-s|})\\
&\le \tilde\rho(2(|x-y|\vee\sqrt{|t-s|}))+\tilde\rho((|x-y|+d_X)\vee \sqrt{|t-s|})\\
&\le \tilde\rho(2(|x-y|\vee\sqrt{|t-s|}))+\tilde\rho(3(|x-y|\vee \sqrt{|t-s|}))\\
&\le 2\tilde\rho(3(|x-y|\vee \sqrt{|t-s|})).
 \end{split}
\]

\medskip

\emph{Case 2:} $(1-|x|)^2\ge (1+t)$. Then $d_X=t+1$.

\medskip

As before, if $(y,s)\in B_{d_X/2}(x)\times (t-d_X^2/4,t]$, then we have
\[
 d_X^\alpha\frac{|u(x,t)-u(y,s)|}{(|x-y|\vee\sqrt{t-s})^\alpha}\le C \|u-u(x,-1)\|_{L^\infty(B_{d_X}(x)\times (t-d_X^2,t])}\le C\tilde\rho(2d_X),
\]
and therefore, 
\[
  |u(x,t)-u(y,s)|\le  \rho_1(|x-y|\vee\sqrt{t-s}).
\]
If $(y,s)\not\in B_{d_X/2}(x)\times (t-d_X^2/4,t]$, then
\[
 \begin{split}
  |u(x,t)-u(y,s)|&\le |u(x,t)-u(x,-1)|+|u(x,-1)-u(y,s)|\\
&\le \tilde\rho(d_X)+\tilde\rho(|x-y|\vee \sqrt{|1+s|})\\
&\le \tilde\rho(2(|x-y|\vee\sqrt{|t-s|}))+\tilde\rho(|x-y|\vee \sqrt{|1+t|})\\
&\le \tilde\rho(2(|x-y|\vee\sqrt{|t-s|}))+\tilde\rho(3(|x-y|\vee \sqrt{|t-s|}))\\
&\le 2\tilde\rho(3(|x-y|\vee \sqrt{|t-s|})).
 \end{split}
\]

\end{proof}

\bibliographystyle{abbrv}
\bibliography{plaplacian}


\bigskip

\bigskip

\noindent T. Jin

\noindent Department of Mathematics, The Hong Kong University of Science and Technology\\
Clear Water Bay, Kowloon, Hong Kong

\smallskip
and
\smallskip

\noindent Department of Computing and Mathematical Sciences, California Institute of Technology \\
1200 E. California Blvd., MS 305-16, Pasadena, CA 91125, USA\\[1mm]
Email: \textsf{tianlingjin@ust.hk} / \textsf{tianling@caltech.edu}

\bigskip

\noindent L. Silvestre

\noindent Department of Mathematics, The University of Chicago\\
5734 S. University Avenue, Chicago, IL 60637, USA\\[1mm]
Email: \textsf{luis@math.uchicago.edu}

\end{document}